\DeclareMathOperator*{\sgn}{sgn}
\DeclareMathOperator*{\Arg}{Arg}
\newcommand{\dee}{{\mathrm d}}
\newcommand{\imag}{{\mathrm i}}
\newcommand{\eps}{\varepsilon}
\newcommand{\R}{\mathbb{R}}
\newcommand{\cyclereal}{\beta}
\newcommand{\cycleimag}{\alpha}
\newcommand{\rem}[1]{}
\newcommand{\rb}{{\bf r}}
\newcommand{\p}{{\bf p}}
\renewcommand{\L}{{\bf L}}
\newcommand{\e}{{\bf e}}
\newtheorem{theo}{Theorem}
\newtheorem{lem}[theo]{Lemma}
\title{Semi-global symplectic invariants of the \\spherical pendulum}
\author{
      H.~R. Dullin %  \thanks{HRD was supported in part by ARC grant DP110102001} \\
     School of Mathematics and Statistics\\
     The University of Sydney\\
     Sydney, NSW 2006, Australia\\
     {\tt Holger.Dullin@sydney.edu.au} 
}
\date{\today}
\begin{document}
\maketitle

\begin{abstract}
\vspace*{1ex}
\noindent
We explicitly compute the semi-global symplectic invariants near the focus-focus point of the 
spherical pendulum. A modified Birkhoff normal form procedure is presented to compute 
the expansion of the Hamiltonian near the unstable equilibrium point in Eliasson-variables. 
Combining this with explicit formulas for the action we find the semi-global symplectic
invariants near the focus-focus point introduced by Vu Ngoc \cite{VuNgoc03}.
We also show that the Birkhoff normal form is the inverse of a complete elliptic integral
over a vanishing cycle.
To our knowledge this is the first time that semi-global symplectic invariants near a focus-focus point
have been computed explicitly. We close with some remarks about the pendulum,
for which the invariants can be related to theta functions in a beautiful way.
\end{abstract}

\section{Introduction}

The spherical pendulum is a paradigm for an integrable system with a focus-focus point.
The classical treatment of the spherical pendulum can be traced to the beginnings of
mechanics in the work of e.g.\ Lagrange and Weierstra\ss, % \cite{Lagrange?,Weierstras}, 
in particular because it appears as a limiting case of the Lagrange top.
Whittaker \cite[paragraph 55(ii)]{Whittaker37} gives the classical treatment, including the explicit solution in terms
of elliptic functions. Its reappearance in a modern context is related to the discovery 
of Hamiltonian Mondromy by Cushman and Duistermat 
\cite{Duistermaat80,CushDuist88}. This non-trivial global property 
of the actions and the rotation number of the spherical pendulum 
has not been noted in the classical treatment. 
The non-trivial topology of certain torus bundles  in the spherical pendulum is best
understood with the help of singular reduction, and the detailed
treatment of the spherical pendulum in this formalism is given in \cite[chapter IV]{CushmanBates97}.
Within the context of KAM theory the frequency map of the system was shown to be 
non-degenerate for all regular values of the energy-momentum map by Horozov \cite{Horozov90}
(also see \cite{Gavrilov00}).
Later in \cite{Horozov93} Horozov showed that the frequency ratio map does have singularities, 
so that the isoenergetic non-degeneracy condition of the KAM theorem fails on codimension 
one curves of non-critical values in the image of the energy-momentum map $F$.
The Hamiltonian monodromy has also been studied from a complex analytic point of view
in  \cite{BeukersCushman02} and \cite{Audin02}.

In view of these works the spherical pendulum is one of the best understood 
Liouville integrable systems. 
In this article we want to add to this knowledge the semi-global symplectic invariants 
of the focus-focus point of the spherical pendulum, and the expansion of its rotation number and period 
near this point in terms of the invariant and the Birkhoff normal form.

Well known  smooth invariants of a dynamical system are the eigenvalues at an equilibrium point, 
which are invariant under smooth coordinate transformations. 
For  Liouville integrable Hamiltonians the local symplectic classification of non-degenerate singularities
was given by Eliasson in \cite{Eliasson84}.
More global invariants have also been considered.
There are Bolsinov's orbital invariants \cite{Bolsinov95,BolFom04} which are invariant under smooth conjugacy.
Using these it was shown that the flow of the Euler top is $C^0$ orbitally equivalent to 
the geodesic flow on some ellipsoid \cite{BolFom04}. 
Moreover, this equivalence cannot be made $C^1$  \cite{BD96}.
Related results on $C^r$ equivalence of divergence free vector fields are given in \cite{Kruglikov99}. 
Restricting to smooth symplectic coordinate transformations the corresponding invariants 
(in one degree of freedom) were introduced by Dufour, Molino and Toulet \cite{Molino94}.
The semi-global symplectic invariants for the Liouville foliation of a Liouville integrable Hamiltonian 
system with two degrees of freedom in the neighbourhood of 
the separatrix of a simple focus-focus point were introduced by Vu Ngoc \cite{VuNgoc03},
and for hyperbolic-hyperbolic equilibria by Dullin and Vu Ngoc \cite{DVN05}. 
The latter paper also contains and explicit calculations of semi-global symplectic invariants near a hyperbolic-hyperbolic equilibrium.

Consider two Liouville integrable systems on the manifold $M^4$ with symplectic structure 
$\Omega$ and with energy-momentum maps $F_1, F_2 : M^4 \to \R^2$, 
each with a simple focus-focus point.
If these integrable systems have the same semi-global symplectic invariants at the focus-focus 
point then there is a symplectic map $\Psi : M^4 \to M^4$ in a full neighbourhood of the separatrix 
of the focus-focus point and 
a map $\Phi : \R^2 \to \R^2$ reparameterizing the integrals such that  
\begin{equation}  \label{eqn:equiv}
    F_1 \circ \Psi = \Phi \circ F_2\,.
\end{equation}
The invariants are given as the coefficients of the power series of the analytic part 
of the singular action expressed as a function of certain regular actions.
Coincidence of the invariants is a necessary condition that the systems can be globally 
equivalent in the sense of \eqref{eqn:equiv}. 
%
%Near a regular torus of the foliation no such invariants exist, but near a separatrix 
%the situation is more rigid.
%
The coincidence of these invariants is also a necessary condition for the corresponding 
quantum systems to have the same eigenfunctions. The eigenvalues do not enter this description. 
The classical analogue of this statement is that the Hamiltonian or the second integral
can be changed with the function $\Phi$ without changing the invariants.
When considering a weaker notion of equivalence which does not allow the transformation 
$\Phi$ then the additional invariants are the coefficients of the (simultaneous) Birkhoff normal form of $F$.

The main objects in the classification are two different kinds of actions. 
First of all the standard canonical action with its singular behaviour 
near a separatrix is essential. But in addition also ``imaginary actions''
which are sometimes associated with the quantum mechanical tunnelling play a crucial role. 
In the spherical pendulum they are given as complete elliptic integrals over
cycles $\cyclereal$ and $\cycleimag$ that form a basis of cycles on the 
corresponding complex torus.
It turns out that the imaginary actions are intimately related to the Birkhoff normal form near
an unstable equilibrium point, more precisely they are the Eliasson coordinates defined in a 
neighbourhood of the equilibrium. The key idea for the construction of the semi-global 
symplectic invariants is to express the regularised canonical action
as a function of the imaginary action. In this way naturally a symplectic invariant is obtained
that is independent of the Hamiltonian.

In \cite{DVN03} the information contained in the transformation 
of the Hamiltonian into normal form \cite{Vey78,Eliasson84,Ruessmann87,Ito89,Zung05} 
combined with the semi-global symplectic invariants \cite{VuNgoc03}
where used to find the form of the frequency ratio map near a simple focus-focus point.
Here we apply the general theory to the spherical pendulum, and we show how
the two essential power series, the Hamiltonian in terms of Eliasson's coordinates
and the semi-global symplectic invariant contained in the action, can actually be computed
using a combination of adapted Lie-series normal form and classical analysis of elliptic integrals of the third kind. 
As a special case the semi-global symplectic invariants 
of the ordinary pendulum are found. We show that these invariants are given by the 
power series of Jacobi's nome in terms of the imaginary action. 

To our knowledge the present work gives the first example where the semi-global symplectic invariants near a focus-focus point are calculated explicitly for a particular system.
We hope that other systems will be added to the list, so that this will
lead the way to a classification of Liouville integrable systems.

\section{Spherical Pendulum}

The spherical pendulum is a point of mass $m$ constrained to move on a sphere under the influence 
of gravity. 
Let $\rb \in S^2 \subset \R^3$ be a point on the sphere $||\rb|| = l$ of radius $l$ 
where the norm is induced by the standard Euclidean scalar product $(\cdot, \cdot)$.
Instead of the usual (singular) spherical coordinates on the sphere here we start with 
a global description that is free of coordinate singularities.
% The position of the point of mass $m$ is $l \rb$ where $l$ is the constant length of the pendulum. 
With $-mg\e_z $ being the force of gravity, Newton's equations of motion are 
\begin{equation} \label{eqn:Newton}
    m \ddot \rb = -mg  \e_z  - \lambda m \rb
\end{equation}
and % setting $\nu^2 = g/l$ 
the Lagrange multiplier $\lambda$ is
\[
   \lambda = \frac{(\dot \rb, \dot \rb) - g (\rb, \e_z)}{(\rb, \rb)}
\]
determined such that $\dee^2 (\rb,\rb)/\dee t^2 = (\rb, \ddot \rb) + (\dot \rb, \dot \rb) = 0$. 
The initial condition must satisfy 
$\dee (\rb, \rb) / \dee t = 2 (\rb, \dot \rb) = 0$ in order to be contained in the tangent space of the sphere.
The system has global Hamiltonian (shifted by a constant so that the {\em upper} equilibrium has energy zero)
\begin{equation} 
   \label{eqn:GlobHam}
   H(\rb, \p)  = \frac{1}{2ml^2} (\L, \L) + mgl \left(  \frac{ (\rb, \e_z) } { ||\rb||} - 1\right)
\end{equation}
with momentum $\p = m \dot \rb$ conjugate to $\rb$, and angular momentum $\L = \rb \times \p$. 
This Hamiltonian with standard symplectic form $\dee \rb \wedge \dee \p$ has
$(\rb,\rb)$ and $(\rb,\p)$ as constants of motion, such that the
constraints $(\rb,\rb)=l^2$ and $(\rb,\p)=0$ are automatically preserved.
% , so that the dynamics takes place in $T^* S^2$.
Using these constraints Hamilton's equations of motion become
\[
   \dot \rb = \frac{1}{ml^2} \L  \times \rb, \qquad
   \dot \p = \frac{1}{ml^2} \L \times \p  -  \frac{mgl}{||\rb||} \e_z +\frac{mgl}{||\rb||^3} ( \e_z, \rb) \rb
\]
and are equivalent to \eqref{eqn:Newton}.
The angular momentum $L_z = (\L, \e_z) = x p_y - y p_x$ is another constant of motion.
Fixing the constrains defines a system on
$T^*S^2$ and this system is Liouville integrable because the 
Poisson bracket of the remaining integrals vanishes, 
$\{ H, L_z \} = 0$, and 
$H$ and $L_z$ are independent almost everywhere.
Formally the Hamiltonian \eqref{eqn:GlobHam} 
also describes the Lagrange top  % (up to a shift of the energy) 
written in the space fixed frame and with the body symmetry reduced, see, e.g. \cite{D03b}. 
The point $\rb$ is a point on the symmetry axis of the body, and $m l^2$ is the value
of the equal moments of inertia. However, for the Lagrange top $\L$ is introduced
as an {\em independent} variable (with the induced Poisson structure), and 
for the general motion $(\rb, \L) \not = 0$, which is impossible for the 
spherical pendulum.

The frequency of small oscillations around the stable lower equilibrium at $z = -l$ is $\nu = \sqrt{g/l}$.
Measuring length in units of $l$, mass in units of $m$, and time in units of 
$\sqrt{l/g}$ all constants can be removed from the Hamiltonian \eqref{eqn:GlobHam}.
Since the transformation to non-dimensionalised variables is not symplectic
we prefer to keep the constants for now.
In the following we are interested in the dynamics near the unstable upper equilibrium at $z = l$.

\section{Linear normal form} 

For the normal form calculation near the unstable equilibrium
a local symplectic coordinate system on $T^*S^2$ is needed. 
In the global Hamiltonian \eqref{eqn:GlobHam} we introduce 
local coordinates $(x,y,l)$ in $\R^3$ by $(x,y,z) = (x,y, \sqrt{l^2-x^2-y^2})$ 
near $z =  l$.
The conjugate momenta $(p_x, p_y, p_l)$ are found from the inverse transpose of 
the Jacobian of the transformation of the coordinates and with $p_l = 0$ the obvious result
that $p_x$ and $p_y$ are unchanged and $p_z = 0$ is found.
%\[ (p_x, p_y, p_z) = (p_\xi +(\xi/l) p_l, p_\eta +(\eta/l) p_l,  + \sqrt{1 - (\xi/l)^2 - (\eta/l)^2} p_l) \,. \]
In these symplectic coordinates with % $\Omega = dx \wedge dp_x + dy \wedge dp_y$ 
the Hamiltonian reads
\[
H = \frac{1}{2ml^2}\left( p_x^2(l^2-x^2) +  p_y^2(l^2-y^2) -2 x y p_x p_y \right) +
       mg ( \sqrt{l^2-x^2-y^2} - l) \,.
\]
This Hamiltonian could have been derived directly from a Lagrangian using 
$x$ and $y$ as local coordinates.

Before transforming the quadratic part of $H$ into Williamson normal form \cite{Williamson36,Arnold78}
the symplectic scaling $x =   \xi/\sqrt{m\nu}$, $p_x =  p_\xi \sqrt{m\nu}$, 
similarly with $\eta, p_\eta$, is done.  Thus we have proved 
\begin{lem} \label{localsympcoo}
There exist local symplectic coordinates near the unstable equilibrium of the 
spherical pendulum with $\Omega = d\xi \wedge d p_\xi + d\eta \wedge d p_\eta$ such that 
\begin{equation} \label{eqn:lem1}
  H = \nu \left(  \frac12 (  p_\xi^2 + p_\eta^2)
    - \frac{\kappa}{2} (\xi p_\xi + \eta p_\eta)^2 +  \frac{1}{\kappa} ( \sqrt{1 - \kappa \rho^2 } - 1) \right)
\end{equation}
where $\rho^2 = \xi^2 + \eta^2$, $\nu^2 = g/l$ and $1/\kappa = ml^2 \nu = mgl/\nu$\,.
\end{lem}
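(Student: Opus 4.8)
The plan is to verify \eqref{eqn:lem1} by following the three explicit coordinate changes announced in the text and tracking their effect on the Hamiltonian. First I would set up the chart $(x,y,z)=(x,y,\sqrt{l^2-x^2-y^2})$ near $z=l$ and compute the induced momenta. The map on configuration space is $\mathbf{r}=(x,y,\sqrt{l^2-x^2-y^2})$, so its Jacobian with respect to $(x,y)$ has the row $\partial z/\partial x = -x/z$, $\partial z/\partial y=-y/z$; the conjugate momenta transform by the inverse transpose, and imposing $p_l=0$ (the fibre direction is normal to $T^*S^2$) indeed gives $p_x,p_y$ unchanged in the first two slots and $p_z=0$. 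Substituting into \eqref{eqn:GlobHam}, the kinetic term $(\mathbf{L},\mathbf{L})/(2ml^2)$ with $\mathbf{L}=\mathbf{r}\times\mathbf{p}$ and $p_z=0$ collapses to $\tfrac{1}{2ml^2}\big(p_x^2(l^2-x^2)+p_y^2(l^2-y^2)-2xyp_xp_y\big)$ after using $x^2+y^2+z^2=l^2$, and the potential $mgl((\mathbf{r},\e_z)/\|\mathbf{r}\|-1)$ becomes $mg(\sqrt{l^2-x^2-y^2}-l)$ since $\|\mathbf{r}\|=l$ on the constraint. This reproduces the displayed intermediate Hamiltonian.

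Next I would apply the symplectic scaling $x=\xi/\sqrt{m\nu}$, $p_x=p_\xi\sqrt{m\nu}$ and likewise for $\eta,p_\eta$, which preserves $\Omega$ because $dx\wedge dp_x = d\xi\wedge dp_\xi$. The kinetic part has two pieces: the ``pure'' part $\tfrac{1}{2ml^2}l^2(p_x^2+p_y^2)=\tfrac{1}{2m}(p_x^2+p_y^2)$, which scales to $\tfrac{\nu}{2}(p_\xi^2+p_\eta^2)$; and the remaining part $-\tfrac{1}{2ml^2}(x p_x+y p_y)^2$ — here one checks $p_x^2x^2+p_y^2y^2+2xyp_xp_y=(xp_x+yp_y)^2$, so the bracket is exactly $l^2(p_x^2+p_y^2)-(xp_x+yp_y)^2$ — which scales to $-\tfrac{1}{2ml^2}(\xi p_\xi+\eta p_\eta)^2$. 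Writing $mgl=ml^2\nu^2=\nu/\kappa$ with $1/\kappa=ml^2\nu$, this last term is $-\tfrac{\nu\kappa}{2}(\xi p_\xi+\eta p_\eta)^2$, matching the middle term of \eqref{eqn:lem1}. For the potential, $x^2+y^2=(\xi^2+\eta^2)/(m\nu)=\rho^2/(m\nu)=\kappa\rho^2/l^2\cdot\tfrac{1}{l^{?}}$ — more directly, $m\nu = 1/(\kappa l^2)$, so $x^2+y^2=\kappa l^2\rho^2$ and $\sqrt{l^2-x^2-y^2}-l = l(\sqrt{1-\kappa\rho^2}-1)$; multiplying by $mg=\nu^2 m$ and using $mgl\cdot\tfrac1l = \nu/\kappa\cdot\tfrac1l\cdot l = \nu/\kappa$ gives $\tfrac{\nu}{\kappa}(\sqrt{1-\kappa\rho^2}-1)$, the third term.

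Finally I would record the bookkeeping of constants: $\nu^2=g/l$ by the stated frequency of small oscillations, and $1/\kappa=ml^2\nu=mgl/\nu$ follows since $ml^2\nu = ml^2\sqrt{g/l}=ml^{3/2}g^{1/2}$ and $mgl/\nu = mgl/\sqrt{g/l}=ml^{3/2}g^{1/2}$ agree. Collecting the three scaled terms with the common prefactor $\nu$ gives precisely \eqref{eqn:lem1}, and since each step was a genuine symplectic coordinate change on a neighbourhood of the upper equilibrium in $T^*S^2$, the asserted local symplectic coordinates exist. The only delicate point is the first step — confirming that the cotangent lift of the graph chart, together with the constraint $p_l=0$, really yields $p_x,p_y$ unchanged and $p_z=0$; this is a short but slightly fiddly linear-algebra computation with the inverse-transpose Jacobian, and everything afterwards is routine algebraic substitution and rescaling.
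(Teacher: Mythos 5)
Your proposal is correct and follows essentially the same route as the paper: the graph chart with cotangent-lifted momenta (inverse-transpose Jacobian, $p_l=0$ giving $p_x,p_y$ unchanged and $p_z=0$), substitution into \eqref{eqn:GlobHam}, and the symplectic scaling $x=\xi/\sqrt{m\nu}$, $p_x=p_\xi\sqrt{m\nu}$, with the identity $p_x^2(l^2-x^2)+p_y^2(l^2-y^2)-2xyp_xp_y=l^2(p_x^2+p_y^2)-(xp_x+yp_y)^2$ and the bookkeeping $1/\kappa=ml^2\nu=mgl/\nu$ yielding \eqref{eqn:lem1} exactly as claimed. Apart from one momentarily garbled (but immediately self-corrected) line in the potential computation, the verification is complete and matches the paper's argument.
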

Taylor expansion of the potential (the last term in \eqref{eqn:lem1}) gives
\[
    -  \frac12 \rho^2 
    - \frac{1}{8} \kappa \rho^4 
    - \frac{1}{16} \kappa^2 \rho^6
    - \frac{5 }{128} \kappa^3 \rho^8
    - \dots 
  = 
    -\sum_{n=1}^\infty  \frac{(2n-3)!!}{(2n)!!} \kappa^{n-1} \rho^{2n}  
     \,,
\]
so that $\kappa$ keeps track of the order of the non-linear terms.
%The general terms of the expansion of the potential is 
%\[
%   (-1)^n{\rm binomial}( \frac12, n )  \kappa^{n-1}\rho^{2n} =   \frac{(2n-3)!!}{(2n)!!} \kappa^{n-1} \rho^{2n}  \,.
%\]
% The units of $1/\kappa$ are that of an action. 
By scaling all variables $\xi, \eta, p_\xi, p_\eta$ with $1/\sqrt{\kappa}$ 
(which is a symplectic transformation with multiplier $1/\kappa$) all constants could be
removed from the Hamiltonian up to the overall factor $\nu/\kappa$, which 
could then be removed by a linear scaling of time.
The two {\em fundamental constants} left in the Hamiltonian are $1/\nu$ with unit of time
and $1/\kappa$ with unit of angular momentum or action (i.e.\ ${\rm kg \, m^2 / s}$).
Without changing the symplectic structure and/or the time it is not possible to remove
them. The usual approach to non-dimensionalise all variables (including time)
is not used here until later because the semi-global symplectic invariants should allow comparison of 
different systems, and only in one of them these units could be chosen freely.

So far point transformations were used to simplify $H$. 
To achieve the Williamson normal form \cite{Williamson36} of the quadratic part of $H$
it turns out that a symplectic rotation in phase space is necessary.
The Hessian of the Hamiltonian has eigenvalues $\pm \nu$ with algebraic and geometric multiplicity two. Hence the equilibrium is a degenerate saddle-saddle point. 
Since there are no coupling terms each saddle can be treated separately in its canonical plane.
A rotation by $\pi/4$ brings the hyperbola $(p_\xi^2 - \xi^2)/2$ into the normal form $p_1 q_1$,
similarly for $(\eta, p_\eta)$.
Thus we have
\begin{lem}  \label{Williamson}
The Williamson normal form at the unstable equilibrium of the spherical pendulum 
is achieved by the linear symplectic transformation
\[
   \sqrt{2} \xi = q_1 - p_1, \quad
   \sqrt{2} p_\xi = q_1 + p_1, \qquad
   \sqrt{2} \eta = q_2 - p_2, \quad
   \sqrt{2} p_\eta = q_2 + p_2 \,. 
\]
The Hamiltonian in the new coordinates reads
\[
  H  = \nu \left( p_1 q_1 + p_2 q_2 - \kappa \frac18 (q^2 - p^2)^2 +  
     \frac{1}{\kappa} \sqrt{1 - \kappa \rho^2} + \frac{\rho^2}{2} - \frac{1}{\kappa} \right), 
\]
where $p^2 = p_1^2 + p_2^2$,  $q^2 = q_1^2 + q_2^2$
and $\rho^2 = p^2 /2+ q^2/2 - (p_1 q_1 + p_2 q_2)$.
\end{lem}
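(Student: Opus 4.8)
The plan is to start from the Hamiltonian of Lemma~\ref{localsympcoo} and carry out the stated linear change of variables explicitly, verifying both that it is symplectic and that it produces the claimed formula. First I would record that, by the Taylor expansion of the potential displayed after Lemma~\ref{localsympcoo} (whose leading term is $-\tfrac12\rho^2$), the quadratic part of $H$ is $\nu\big(\tfrac12(p_\xi^2+p_\eta^2)-\tfrac12(\xi^2+\eta^2)\big)$, which splits into the two uncoupled planar saddles $\tfrac12(p_\xi^2-\xi^2)$ and $\tfrac12(p_\eta^2-\eta^2)$. Each of these is carried to the standard hyperbolic form $p q$ by a rotation by $\pi/4$ in its own canonical plane, and writing the two rotations together is exactly the transformation in the statement: $\sqrt2\,\xi = q_1-p_1$, $\sqrt2\,p_\xi = q_1+p_1$, and the analogous pair for $(\eta,p_\eta)$.

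Next I would check that this linear map is symplectic. Since $d\xi\wedge dp_\xi = \tfrac12\,d(q_1-p_1)\wedge d(q_1+p_1) = dq_1\wedge dp_1$, and likewise $d\eta\wedge dp_\eta = dq_2\wedge dp_2$, the two-form $\Omega$ is preserved, so $(q_1,q_2,p_1,p_2)$ are again canonical coordinates; alternatively, each $2\times2$ block is a rotation and hence lies in $\mathrm{Sp}(2,\R)=\mathrm{SL}(2,\R)$.

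The substitution into \eqref{eqn:lem1} then reduces to three elementary identities. Writing $q^2=q_1^2+q_2^2$, $p^2=p_1^2+p_2^2$, one finds $\xi p_\xi+\eta p_\eta=\tfrac12(q^2-p^2)$, $p_\xi^2+p_\eta^2=\tfrac12(q^2+p^2)+(p_1q_1+p_2q_2)$, and $\rho^2=\xi^2+\eta^2=\tfrac12(q^2+p^2)-(p_1q_1+p_2q_2)$, the last of which is already the claimed formula for $\rho^2$. Feeding these in: the cross term $-\tfrac\kappa2(\xi p_\xi+\eta p_\eta)^2$ becomes $-\tfrac\kappa8(q^2-p^2)^2$; the potential $\tfrac1\kappa(\sqrt{1-\kappa\rho^2}-1)$ becomes $\tfrac1\kappa\sqrt{1-\kappa\rho^2}-\tfrac1\kappa$ with $\rho^2$ as above; and the kinetic term $\tfrac12(p_\xi^2+p_\eta^2)=\tfrac14(q^2+p^2)+\tfrac12(p_1q_1+p_2q_2)$, which I would rewrite as $p_1q_1+p_2q_2+\tfrac12\rho^2$ using the $\rho^2$ identity once more. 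Collecting the four pieces and pulling out the overall factor $\nu$ gives precisely the expression in the statement.

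There is essentially no genuine obstacle here, and it is really a bookkeeping exercise; the one conceptual point (already flagged in the text preceding the lemma) is that the quadratic form decouples into two copies of the \emph{same} planar saddle, so a single rotation angle $\pi/4$ works simultaneously for both. The only item worth a remark is that the non-polynomial potential term depends on the phase-space variables solely through $\rho^2$, so it transforms cleanly once the identity for $\rho^2$ is in hand and no expansion of the square root is needed at this stage.
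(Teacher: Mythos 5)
Your proposal is correct and follows essentially the same route as the paper, which likewise treats the two uncoupled saddles by a rotation of $\pi/4$ in each canonical plane and then substitutes into the Hamiltonian of Lemma~\ref{localsympcoo}; your explicit verification of symplecticity and of the identities for $\xi p_\xi+\eta p_\eta$, $p_\xi^2+p_\eta^2$ and $\rho^2$ is just the bookkeeping the paper leaves implicit. Nothing is missing.
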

% Note that $p^2 q^2  =  J_1^2 + J_2^2 $ and
The quadratic part of the potential has been absorbed in the quadratic normal form terms
$H_2 = \nu( p_1 q_2 + p_2 q_2)$. 
The remaining terms of the potential are of order 4 and higher.
Together with the quartic term from the kinetic energy the quartic terms are
\[
   \frac{1}{\nu\kappa} H_4  =   - \frac18 (q^2-p^2)^2 - \frac{1}{32} (p^2 + q^2 - 2 (p_1 q_1 + p_2 q_2))^2 \,.
\]

The angular momentum in the new variables reads $q_1 p_2 - q_2 p_1$, 
but it does not enter the quadratic Williamson normal form. This is why the quadratic normal 
form gives a degenerate saddle, while from the point of view of the foliation of 
the integrable system it is a focus-focus point. The reason for this is that any 
function of the integrals $H$ and 
$L_z$ is again an integral, and thus has the same foliation. 
In particular the linear combination
$H + \omega L_z$ for any $\omega$ has an equilibrium of focus-focus type, 
with eigenvalues $\pm\nu \pm \imag \omega$.
Incidentally this also shows that this equilibrium is a non-degenerate singularity 
in the sense of Eliasson. %  \cite{Bolsinov?}.
In fact we have achieved more than the Williamson normal form, 
because the second integral is also simultaneously normalised. 
See Appendix~\ref{app:Linear} for a few more remarks about this.

\section{Non-linear normal form}

From the work of  \cite{Vey78,Eliasson84,Ruessmann87,Ito89} we know that 
there is a {\em convergent} Birkhoff normal form near a non-degenerate 
% in the sense of Eliason
equilibrium of a Liouville integrable system.
We are now going to compute this normal form for the spherical pendulum.

The equilibrium is of focus-focus type and thus we know that the Hamiltonian 
can be expressed as a function of two commuting integrals $J_1$ and $J_2$, 
where 
\begin{equation} \label{eqn:J1andJ2}
   J_1 = q_1 p_1 + q_2 p_2, \quad 
   J_2 = q_1 p_2 - q_2 p_1 \,.
\end{equation}
At quadratic order the Williamson normal form has already achieved this, $H_2 = \nu J_1$.
But the higher order terms still contain other combinations of the variables
$q_1, q_2, p_1, p_2$. Using non-linear near-identity transformations the 
Hamiltonian can be normalized to arbitrary order. For this we need

\begin{lem}[``Integral-Angle'' coordinates] \label{integral-angle}
The integrals $J_1$ and $J_2$ as given by \eqref{eqn:J1andJ2} are momenta of a symplectic coordinate system
on $\R^4$ with the plane $q_1 = q_2 = 0$ removed. The canonically conjugate 
variables  are
\[
   \theta_1 = \ln \sqrt{q_1^2 + q_2^2}, \quad
   \theta_2 = \tan^{-1} \frac{q_2}{q_1}
\]
\end{lem}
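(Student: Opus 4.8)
The plan is to verify directly that the map $(q_1,q_2,p_1,p_2)\mapsto(\theta_1,\theta_2,J_1,J_2)$ is symplectic, i.e.\ that it pulls back the canonical form $\dee\theta_1\wedge\dee J_1 + \dee\theta_2\wedge\dee J_2$ to $\dee q_1\wedge\dee p_1 + \dee q_2\wedge\dee p_2$, on the open set where $q_1^2+q_2^2>0$. Since $J_1,J_2$ are already given as functions of the original coordinates, and $\theta_1,\theta_2$ depend only on $(q_1,q_2)$, the cleanest route is to introduce polar-type coordinates in the $q$-plane: write $q_1 = r\cos\phi$, $q_2 = r\sin\phi$, so that $\theta_1 = \ln r$ and $\theta_2 = \phi$. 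It then suffices to show that one may solve for $p_1,p_2$ as functions of $(\theta_1,\theta_2,J_1,J_2)$ and that the resulting $1$-form $p_1\,\dee q_1 + p_2\,\dee q_2$ equals $J_1\,\dee\theta_1 + J_2\,\dee\theta_2$ up to an exact differential; by the standard generating-function criterion this is equivalent to the transformation being canonical.

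First I would record the linear relation between $(p_1,p_2)$ and $(J_1,J_2)$ at fixed $(q_1,q_2)$. From \eqref{eqn:J1andJ2},
\[
   \begin{pmatrix} J_1 \\ J_2 \end{pmatrix}
   = \begin{pmatrix} q_1 & q_2 \\ -q_2 & q_1 \end{pmatrix}
     \begin{pmatrix} p_1 \\ p_2 \end{pmatrix},
\]
whose coefficient matrix has determinant $q_1^2+q_2^2 = r^2 > 0$ on our domain, so it is invertible and $(p_1,p_2)$ are smooth functions of $(q_1,q_2,J_1,J_2)$; together with $q_i = q_i(r,\phi)$ this gives a global diffeomorphism onto its image. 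Next I would compute $p_1\,\dee q_1 + p_2\,\dee q_2$ in the coordinates $(r,\phi,J_1,J_2)$. Inverting the matrix gives $p_1 = (q_1 J_1 - q_2 J_2)/r^2$ and $p_2 = (q_2 J_1 + q_1 J_2)/r^2$; substituting $q_1=r\cos\phi$, $q_2=r\sin\phi$ and $\dee q_1 = \cos\phi\,\dee r - r\sin\phi\,\dee\phi$, $\dee q_2 = \sin\phi\,\dee r + r\cos\phi\,\dee\phi$, the coefficient of $\dee r$ collapses to $J_1/r$ and the coefficient of $\dee\phi$ to $J_2$, using $\cos^2\phi+\sin^2\phi=1$. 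Hence $p_1\,\dee q_1 + p_2\,\dee q_2 = J_1\,\dee r/r + J_2\,\dee\phi = J_1\,\dee\theta_1 + J_2\,\dee\theta_2$, since $\dee\theta_1 = \dee r/r$ and $\dee\theta_2 = \dee\phi$. Taking the exterior derivative of this identity yields $\dee p_1\wedge\dee q_1 + \dee p_2\wedge\dee q_2 = \dee J_1\wedge\dee\theta_1 + \dee J_2\wedge\dee\theta_2$, i.e.\ $\Omega = \dee\theta_1\wedge\dee J_1 + \dee\theta_2\wedge\dee J_2$, which is the claim.

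There is little genuine obstacle here; the statement is essentially a computation, and the only point requiring a little care is the domain. The coordinates $\theta_2 = \tan^{-1}(q_2/q_1)$ and the polar angle $\phi$ are only locally well-defined (multivalued by $2\pi$, and $\tan^{-1}$ has the usual branch issue when $q_1 = 0$), so strictly speaking one works on the universal cover of the punctured $q$-plane, or on each chart $q_1>0$, $q_1<0$, $q_2>0$, $q_2<0$ separately; the differential $\dee\theta_2$ is nonetheless globally defined on the punctured plane, so the symplectic identity holds globally on $\R^4$ minus the plane $q_1=q_2=0$, exactly as stated. I would therefore phrase the conclusion in terms of the globally defined $1$-form and note the branch caveat for $\theta_2$ in passing.
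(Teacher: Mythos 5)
Your proof is correct. It takes a route that differs in technique from the paper's: the paper complexifies, setting $\zeta = q_1 + \imag q_2$, $\pi = p_1 + \imag p_2$, observing $J_1 - \imag J_2 = \zeta\bar\pi$ and $\theta_1 + \imag\theta_2 = \ln\zeta$, and then computes the complex two-form identity $\dee(\theta_1+\imag\theta_2)\wedge\dee(J_1-\imag J_2) = \dee\zeta\wedge\dee\bar\pi$, taking real parts to get symplecticity; you instead solve the linear system for $(p_1,p_2)$ and verify, in polar coordinates, the one-form identity $p_1\,\dee q_1 + p_2\,\dee q_2 = J_1\,\dee\theta_1 + J_2\,\dee\theta_2$. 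Your version is marginally stronger and more structural: it shows equality of the Liouville one-forms on the nose (not just of their exterior derivatives), exhibiting the change of variables as the cotangent lift of the point transformation $(q_1,q_2)\mapsto(\ln r,\phi)$, and your inverse formulas $p_1 = (q_1J_1 - q_2J_2)/r^2$, $p_2 = (q_2J_1 + q_1J_2)/r^2$ agree with the inverse the paper writes down. You are also more explicit than the paper about the multivaluedness of $\theta_2$ (the paper's $\ln\zeta$ has the same branch issue and is passed over silently), correctly noting that $\dee\theta_2$ is globally defined on the punctured plane so the symplectic identity holds on all of $\R^4$ minus $q_1=q_2=0$. What the paper's complexification buys is brevity and a formulation adapted to the focus-focus setting, where $J_1-\imag J_2$ as a single complex momentum is exactly the Eliasson-type coordinate used later; both arguments are short direct computations and each fully proves the lemma.
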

where only $\theta_2$ is an angle.
\begin{proof}
The construction mimics the usual action-angle variables near an elliptic-elliptic equilibrium point, 
however, here we are near a focus-focus point.
Formally going into the complex plane we define 
%\footnote{double use of $\zeta$}
$\zeta = q_1 +\imag q_2$, $\pi = p_1 + \imag p_2$ 
so that $J_1 - \imag J_2 = \zeta \bar \pi$ 
and $\theta_1 + \imag \theta_2 = \ln \zeta$. Then 
\[
    \dee (\theta_1 + \imag \theta_2) \wedge \dee ( J_1 - \imag J_2 )
   =\dee \ln \zeta \wedge  \dee \zeta\bar \pi= \dee \zeta \wedge  \dee \bar \pi 
\]
and taking the real part of the previous formula and using
$ \Re( \dee \zeta \wedge \dee \bar \pi) =  \dee q_1 \wedge \dee p_1 + \dee q_2 \wedge \dee p_2 $
gives the symplecticity of the transformation. The transformation is defined outside the set $q_1 = q_2 = 0$.
The inverse of the transformation is given by
\[ 
\begin{aligned}
    q_1 &= \cos\theta_2 \, e^{\theta_1}, \quad 
    & p_1 = (J_1\cos\theta_2 - J_2 \sin\theta_2) \, e^{-\theta_1}, \\
    q_2 &= \sin\theta_2 \, e^{\theta_1}, \quad
    & p_2 = (J_1\sin\theta_2 + J_2 \cos\theta_2) \, e^{-\theta_1} \,,
\end{aligned}
\]
so that $q^2 = e^{2 \theta_1}$ and $p^2 = e^{-2\theta_1}(J_1^2 + J_2^2)$.
\end{proof}

Inserting this change of coordinates in to the Hamiltonian of lemma~\ref{Williamson}
a new Hamiltonian is obtained which is a function of $J_1, \theta_1$, and $J_2^2$ only.

It is important to keep in mind that at this stage $J_1$ is only equal to 
the true Eliasson coordinates up to quadratic order. Normalising the Hamiltonian order by order 
each step of the normal form procedure introduces new variables $\tilde J_i, \tilde \theta_i$, and then the tildes are dropped.
In the limit of infinite order the Hamiltonian thus becomes a function of $J_1$ and $J_2$ only.
In our particular case $J_2$ is unchanged in the process, since $H$ is independent of $\theta_2$, 
in other words $J_2$ already is a global integral. Note that the conjugate angle $\theta_2$ 
is in general changed by the normalisation process.
We are now going to describe the details of a practical implementation of 
this iterative procedure.

The Hamiltonian is independent of $\theta_2$, because this is the angle conjugate 
to the global integral $L_z = J_2$. % , $\{H, L_z\} = 0$. 
The identity $p^2 q^2 = J_1^2 + J_2^2$ (using the notation for $p$ and $q$ of lemma~\ref{Williamson})
% \footnote{defining $J$, explain more?}
allows to eliminate occurrences of $p^2$ in $H$ using $p = J \exp(-\theta_1)$
where $J^2 =  J_1^2 + J_2^2$.
Thus we find for the lowest order terms that are non-linear in $J_i$ 
\begin{equation}
\begin{aligned} \label{H4is}
  \frac1 {\nu\kappa} H_4 &= -\frac18 ( J^2 e^{-2\theta_1} - e^{2\theta_1})^2 - 
              \frac{1}{32}(  J^2 e^{-2\theta_1} + e^{2\theta_1} - 2 J_1)^2 \\
              &= -\frac{5}{32} J^4 e^{-4\theta_1} + \frac18 J_1 J^2 e^{-2\theta_1} 
               + \frac{1}{16}J_1^2 + \frac{3}{16}J_2^2
               + \frac18 J_1 e^{2\theta_1} - \frac{5}{32} e^{4\theta_1}
\end{aligned}
\end{equation}

Now we seek a perturbation theory that removes the $\theta_1$ dependence in this
and all higher order terms.
The classical approach with mixed variables generating functions 
is quite cumbersome at higher order; it is briefly described in Appendix~\ref{pert}. 
The better alternative is the Lie series approach, 
in which the (near identity) canonical transformations are generated by the time 
$\epsilon$ map of some flow. The Hamiltonian of this flow is the generating function
of the transformation which only depends on the old variables. The transformation is
obtained by integrating this flow, which is conveniently done by computing
iterated Poisson brackets in a kind of Baker-Campbell-Hausdorff formula.

The abstract formalism is well understood, see e.g.\ \cite{MeyerHall92}. 
A particular version of this type of perturbation 
theory has to specify a solution to the so called Lie-equation (or homological equation)
\[
    \{  H_2, W_i \} = H_i -  K_i
\]
where $H_i$ are the terms of order $i$ of the given Hamiltonian, 
and $K_i$ are the terms remaining in the transformed Hamiltonian,
and $W_i$ is the generator of the transformation, all at stage  $i$ of the iterated transformation process. 
The range ${\cal R}_i$ of the homological operator $\{ H_2, \cdot  \}$ is the set of terms that 
can be removed within the chosen class of transformations generated by $W_i$.
A particular normal form consists of choosing a complement to the range ${\cal S}_i$,
which is the set of unremovable terms at stage $i$. 
Hence $K_i \in {\cal S}_i$ and $H_I - K_i \in {\cal R}_i$.
If the homological operator $\{  H_2, \cdot  \}$ is semisimple 
${\cal S}_i$ is the kernel of the homological operator.
In the classical example of Birkhoff normal form near an elliptic 
equilibrium point $K_i$ is the average of $H_i$ over the flow of $H_2$, 
while $R_i$ is the oscillating part. The solvability condition for $W_i$ 
is that $K_i - H_i$ has zero average.

We assume that the Hamiltonian is  in Williamson normal form of lemma~\ref{Williamson}
and written in terms of the ``integral-angle'' coordinates of lemma~\ref{integral-angle}, 
hence $H_2 = \nu J_1$ for the spherical pendulum.
A higher order term $H_i$ is in normal form when it is a function of $(J_1, J_2)$ alone. 
This is similar to the usual Birkhoff normal form near an elliptic equilibrium. 
The essential difference is that here $J_1$ is not a canonical action (it does not have
a periodic flow, in other words, $\theta_1$ is not an angle).
Define a family of classes of functions ${\cal P}_{i}$ as functions 
of the form
\[
    \sum_{k=0}^i Q_k(J_1, J_2) e^{(i - 2k)\theta_1}
\]
where $Q_k$ is a degree $k$ homogeneous polynomial.
The operator $\{ H_2, \cdot \} : {\cal P}_i \to {\cal P}_i$ simply gives
$W_i \mapsto \nu \partial W_i/ \partial \theta_1$. This operator is simple and 
splits into kernel and range. The kernel consists of functions ${\cal S}_i$ that are independent 
of  $\theta_1$. Because of the particular form of the class of functions ${\cal P}_i$ 
only the identity is in the kernel for odd $i$, while for even $i$ 
the kernel consists of polynomials $Q_{i/2}$.
The range consists of functions ${\cal R}_i$ where each term does depend on $\theta_1$, 
in other words ${\cal P}_i$ with ${\cal S}_i$ removed.

Using this in conjunction with theorem VII.B.4 in \cite{MeyerHall92} we can now prove
\begin{lem} \label{nofo}
The Lie series normal form with respect to $\{ H_2, \cdot \} $ for the spherical pendulum is unique
and the normalised Hamiltonian is a function of $J_1$ and $J_2^2$ only.
\end{lem}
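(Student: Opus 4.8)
The plan is to set up the Lie-series normalisation as an induction on the order $i$, using the structure of the classes $\mathcal{P}_i$ and the fact that the homological operator $\{H_2,\cdot\} = \nu\,\partial/\partial\theta_1$ acts diagonally on the monomials $Q_k(J_1,J_2)e^{(i-2k)\theta_1}$. First I would observe that the Hamiltonian of lemma~\ref{Williamson}, once rewritten in integral-angle coordinates via lemma~\ref{integral-angle} and with $p^2$ eliminated using $p^2 q^2 = J^2 = J_1^2+J_2^2$, has every homogeneous term of order $i$ lying in $\mathcal{P}_i$: the computation \eqref{H4is} is the prototype. Since the classes $\mathcal{P}_i$ are closed under the Poisson bracket with $H_2$ and, crucially, closed under Poisson brackets with each other in the graded sense $\{\mathcal{P}_i,\mathcal{P}_j\}\subseteq \mathcal{P}_{i+j-2}$ (because bracketing with respect to $(\theta_1,J_1)$ and $(\theta_2,J_2)$ either differentiates $e^{m\theta_1}$ in $\theta_1$, lowering the polynomial-in-$J$ degree count appropriately, or differentiates in $J_1$; in all cases the bookkeeping weight is preserved), the whole Lie-series iteration stays inside the algebra $\bigcup_i\mathcal{P}_i$.

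Next I would run the induction. At stage $i$ the accumulated Hamiltonian term $H_i\in\mathcal{P}_i$; decompose $H_i = S_i + R_i$ with $S_i\in\mathcal{S}_i$ the $\theta_1$-independent part (the kernel) and $R_i\in\mathcal{R}_i$ the part on which $\{H_2,\cdot\}$ is invertible. Then $W_i := (\{H_2,\cdot\})^{-1}R_i \in \mathcal{P}_i$ is the unique generator with no kernel component, and it removes $R_i$ while leaving $K_i = S_i$. Invoking theorem VII.B.4 of \cite{MeyerHall92}, this procedure converges formally to a Hamiltonian lying in $\bigcup_i\mathcal{S}_i$, i.e.\ a function of $J_1$ and $J_2$ alone; convergence in the analytic sense is guaranteed by the cited results \cite{Vey78,Eliasson84,Ruessmann87,Ito89}. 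Uniqueness follows from the two choices made: fixing the complement $\mathcal{S}_i$ (kernel of the semisimple operator) determines $K_i$, and demanding $W_i$ have no kernel component determines $W_i$; any other normalising transformation differs by a transformation generated by kernel elements, which by the structure of $\mathcal{P}_i$ only reshuffles among the $Q_{i/2}(J_1,J_2)$ without changing them, hence produces the same normal form.

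The final point — that the normalised Hamiltonian depends on $J_2$ only through $J_2^2$, not just on $J_1,J_2$ — I would get from the $\theta_2$-independence of $H$ noted after lemma~\ref{integral-angle}, i.e.\ $J_2 = L_z$ is already a global integral and $H$ commutes with the rotation $\theta_2\mapsto\theta_2+c$. I would argue that the original Hamiltonian in the $(q,p)$ variables is invariant under the discrete symmetry $(q_1,q_2,p_1,p_2)\mapsto(q_1,-q_2,p_1,-p_2)$, which fixes $J_1$ and sends $J_2\mapsto -J_2$ (and $\theta_1\mapsto\theta_1$, $\theta_2\mapsto-\theta_2$); indeed one checks directly that $H$ of lemma~\ref{Williamson} has this symmetry since it is built from $p^2,q^2$ and $p_1q_1+p_2q_2 = J_1$. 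Because the normal form is \emph{unique}, it must inherit this symmetry, so as a function of $(J_1,J_2)$ it is even in $J_2$ — confirmed already at quartic order in \eqref{H4is}, where $J_2$ enters only as $\tfrac{3}{16}J_2^2$.

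The main obstacle I expect is the bracket-closure statement $\{\mathcal{P}_i,\mathcal{P}_j\}\subseteq\mathcal{P}_{i+j-2}$, which is what makes the iteration well-defined at all orders and keeps the generators inside the prescribed function class; everything else is then a routine application of the abstract Lie-series machinery in \cite{MeyerHall92}. One has to be a little careful that $\partial_{J_1}$ applied to a degree-$k$ homogeneous $Q_k$ yields a degree-$(k-1)$ polynomial while simultaneously the $e^{m\theta_1}$ exponents shift by the bracket in such a way that the index $i+j-2$ comes out right — essentially a weight argument where $\theta_1$-differentiation and $J$-degree-lowering trade off. Once this is in hand the proof is essentially bookkeeping.
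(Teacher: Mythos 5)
Your proposal is correct and follows essentially the same route as the paper: verify that the terms of the Hamiltonian in integral--angle coordinates lie in the graded classes $\mathcal{P}_i$, that the homological operator $\nu\,\partial/\partial\theta_1$ is simple on $\mathcal{P}_i$ so the Lie equation is solved by integration in $\theta_1$ with $W_i\in\mathcal{R}_i$, that the classes are closed under Poisson bracket and kernel elements commute (the hypotheses of the cited Meyer--Hall uniqueness theorem), and that evenness in $J_2$ follows from the discrete symmetry $J_2\to-J_2$ of the original Hamiltonian. Your bracket bookkeeping $\{\mathcal{P}_i,\mathcal{P}_j\}\subseteq\mathcal{P}_{i+j-2}$ and the explicit reflection $(q_2,p_2)\mapsto(-q_2,-p_2)$ realizing the symmetry are only slightly more explicit versions of what the paper states.
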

\begin{proof}
We have already observed that this operator acting on ${\cal P}_i$ is simple.
For uniqueness we need to check $\{ {\cal P}_i, {\cal P}_j \} \subset {\cal P}_{i+j}$
and $\{ {\cal S}_i, {\cal S}_j \}  = 0$. The latter is true since $J_1$ and $J_2$ commute.
The former holds because the commutator yields a function in ${\cal P}_{i+j-1} \subset {\cal P}_{i+j}$.
The transformation given in lemma~\ref{integral-angle} combined with
lemma~\ref{Williamson} shows that all higher order terms $H_i$ are in ${\cal P}_{i}$.
Finally we need to solve the Lie equation, and show that it has a solution in ${\cal R}_i$. In our
case the Lie equation reads
\[
   H_i - K_i = \nu\frac{ \partial W_i}{\partial \theta_1} \,.
\]
Since the left hand side is in ${\cal R}_i$ (indefinite) integration with respect to $\theta_1$ 
gives $W_i \in {\cal R}_i$.
The normalised Hamiltonian is a function of $J_2^2$ only because the original Hamiltonian 
has the discrete symmetry $J_2 \to -J_2$ as well. Thus in fact the polynomials $Q_k$ are also 
even functions of $J_2$, and this property is preserved in the normalisation process.
\end{proof}

Note that there are no small denominators in this process, since there is just one angle involved.
Moser showed in \cite{Moser58} that even in the general non-integrable case 
a loxodromic equilibrium point has a convergent normal form.
The above theorem shows how the Eliasson normal form of the spherical pendulum
can be computed using Lie series. In the resulting normalised Hamiltonian all the higher 
order terms are in ${\cal S}_i$, i.e.\ they depend on $(J_1, J_2)$ only,
as desired. Thus we obtain

\begin{theo} \label{BNF1}
The normal form at the focus-focus point of the spherical pendulum % up to order 5 
is 
\begin{align} 
\label{hofj}
  &  \frac{1}{\nu} H(J_1, J_2)  = 
%    j_1 + \frac{\kappa}{16}( j_1^2 +  3 j_2^2) - \frac{\kappa^2}{256}j_1(j_1^2 + 9 j_2^2) 
%    + \frac{\kappa^3}{8192}(5j_1^4 + 102 j_1^2 j_2^2 + 33 j_2^4)  \\
%   - \frac{3 \kappa^4}{262144} j_1( 11 j_1^4 + 410 j_1^2 j_2^2 + 271 j_2^4) + O(j^6) \,.
    J_1 + \frac{\kappa}{16}( J_1^2 +  3 J_2^2) - \frac{\kappa^2}{256}J_1(J_1^2 + 9 J_2^2) + \\
    & + \frac{\kappa^3}{8192}(5J_1^4 + 102 J_1^2 J_2^2 + 33 J_2^4)  
      - \frac{3 \kappa^4}{262144} J_1( 11 J_1^4 + 410 J_1^2 J_2^2 + 271 J_2^4) + O(J^6) \,.
\nonumber
\end{align}
\end{theo}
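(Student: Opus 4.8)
The plan is to implement the Lie-series normalisation scheme set up in Lemma~\ref{nofo} explicitly, order by order in $\kappa$, starting from the Hamiltonian of Lemma~\ref{Williamson} rewritten in the integral-angle coordinates of Lemma~\ref{integral-angle}. The quadratic part is already $H_2 = \nu J_1$, and the quartic part is the expression \eqref{H4is}, which lies in $\mathcal{P}_4$. The normal-form recipe is: at each stage $i$, split $H_i \in \mathcal{P}_i$ into its $\theta_1$-independent part (the piece in $\mathcal{S}_i$, which becomes $K_i$) and the remainder (in $\mathcal{R}_i$), then solve the Lie equation $H_i - K_i = \nu\,\partial W_i/\partial\theta_1$ by integrating in $\theta_1$ — this is just termwise division of the coefficient of $e^{(i-2k)\theta_1}$ by $\nu(i-2k)$, with no small denominators. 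One then pushes forward the higher-order terms of the Hamiltonian by the time-one flow of $W_i$ via the Lie-series (Baker–Campbell–Hausdorff) expansion $e^{\{\cdot,W_i\}}$, collecting the new contributions at orders $i+1, i+2, \dots$.

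**First I would** carry out stage $i=4$ by hand: from \eqref{H4is} the $\theta_1$-independent part is $\frac1{16}J_1^2 + \frac3{16}J_2^2$ (times $\nu\kappa$), which gives the first nonlinear term $\frac{\kappa}{16}(J_1^2 + 3J_2^2)$ in \eqref{hofj}; the remaining four terms with $e^{\pm2\theta_1}, e^{\pm4\theta_1}$ fix $W_4$. Then the bracket $\{H_6^{\text{old}} + \tfrac12\{H_4,W_4\}, \cdot\}$-type corrections feed into stage $6$ (note that only even orders appear, since the potential in Lemma~\ref{localsympcoo} is even in $\rho$ and the kinetic term is quadratic, so all $H_i$ with $i$ odd vanish — consistent with $H(J_1,J_2)$ in \eqref{hofj} having only the pattern $J_1^{\text{even power}}$ or $J_1\cdot(\text{even})$). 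The claim that $H$ depends on $J_2^2$ rather than $J_2$ is exactly Lemma~\ref{nofo}, so I may invoke it. I would then iterate to extract the $\kappa^2$, $\kappa^3$ and $\kappa^4$ coefficients, which requires expanding the potential $\frac1\kappa(\sqrt{1-\kappa\rho^2}-1)$ to order $\rho^{10}$ using the displayed series $-\sum (2n-3)!!/(2n)!!\,\kappa^{n-1}\rho^{2n}$, re-expressing each $\rho^{2n}$ via $\rho^2 = p^2/2 + q^2/2 - J_1 = \tfrac12 J^2 e^{-2\theta_1} + \tfrac12 e^{2\theta_1} - J_1$, and tracking the Lie-series corrections $\tfrac12\{H_{i},W_{i}\}$, $\tfrac1{3!}\{\{H_i,W_i\},W_i\}$, etc., to the relevant order.

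**The main obstacle** is purely computational bookkeeping: the Lie-series pushforward at orders $6$, $8$, $10$ generates a proliferation of terms of the form $Q_k(J_1,J_2)e^{(i-2k)\theta_1}$, and each must be split, integrated, and re-substituted, with the nested Poisson brackets at order $\kappa^4$ involving compositions of up to four $W_j$'s. There are no conceptual difficulties — semisimplicity of $\{H_2,\cdot\}$ on each $\mathcal{P}_i$ (established before Lemma~\ref{nofo}), the containment $\{\mathcal{P}_i,\mathcal{P}_j\}\subset\mathcal{P}_{i+j}$, and convergence of the whole procedure are all already in hand — but the algebra is heavy enough that one realistically does it with a computer algebra system, verifying at the end that every $K_i$ produced is indeed $\theta_1$-independent and even in $J_2$, as the theory predicts. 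A useful internal consistency check is that setting $J_2 = 0$ must reproduce the one-degree-of-freedom (planar pendulum) normal form, and that the leading coefficients match the Taylor expansion of the potential: e.g. the $\frac{\kappa}{16}J_1^2$ term should be traceable to the combination of the $-\tfrac18\kappa\rho^4$ potential term and the quartic kinetic term after averaging out $\theta_1$.

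**Therefore** the proof is: (i) state the normalisation algorithm from Lemma~\ref{nofo}; (ii) perform stage $4$ explicitly to get the $\kappa^1$ term and $W_4$; (iii) iterate through stages $6, 8, 10$, recording each $K_i$; (iv) collect the results into \eqref{hofj}, with the $O(J^6)$ error absorbing stage $12$ and beyond; (v) remark that uniqueness of the normal form (Lemma~\ref{nofo}) guarantees the answer is well-defined independent of the order in which intermediate choices are made.
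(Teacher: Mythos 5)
Your proposal is correct and follows essentially the same route as the paper's own proof: the paper establishes Theorem~\ref{BNF1} precisely by running the Lie-series normalisation of Lemma~\ref{nofo} order by order, displaying only the first stage ($K_4 = \kappa\nu(\tfrac1{16}J_1^2+\tfrac3{16}J_2^2)$ and the corresponding $W_4$, with the next unremovable terms coming from $H_6 + \tfrac12\{H_4,W_4\}$) and declaring the higher stages a straightforward but lengthy calculation whose details are omitted. Your extra bookkeeping observations (vanishing odd orders, evenness in $J_2$, uniqueness via Lemma~\ref{nofo}) are all consistent with what the paper asserts.
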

\begin{proof}
The proof is obtained by straightforward but lengthy calculation along the lines given in 
lemma~\ref{nofo}, and the details are omitted. Here we just give the first 
steps of this computation to illustrate the process.
There are no third order terms, so $H_3 = K_3 = W_3 = 0$. 
Next the $\theta_1$ independent terms in $H_4$ can be read off from \eqref{H4is}:
\[
   \frac{1}{\kappa\nu}K_4 = \frac{1}{16}J_1^2 + \frac{3}{16} J_2^2
\]
and integration of the remaining terms gives the generating function whose flow removes these terms:
\[
  \frac{1}{\kappa} W_4(J, \theta) = \frac{1}{\nu\kappa } \int (H_4 - K_4)d\theta_1 = 
             \frac{5}{128} J^4 e^{-4\theta_1} - \frac{1}{16} J_1 J^2 e^{-2\theta_1} 
                    + \frac{1}{16} J_1 e^{2\theta_1} - \frac{5}{128} e^{4\theta_1} \,.
\]
The terms of third order in $J$ are the unremovable terms in $H_6 + \frac12 \{ H_4, W_4 \}$.
\end{proof}

{\bf Remarks:} 
1) Setting $J_2 = 0$ the normal form of the ordinary pendulum at its unstable 
equilibrium point is obtained.
2) It appears that by the non-symplectic scaling $J_i \to 32 J_i$ all coefficients of $H(J_1, J_2)$ become integers.
3) The normal form procedure works for any integrable system with a focus-focus point that has a global integral $J_2$, 
so that the higher order terms are independent of $\theta_2$.

\section{Action Integrals}

The normal form calculation of the previous section is local, and hence only 
valid in a neighbourhood of the equilibrium point. The local normal form actions 
are denoted by $J_i$, while the global canonical actions are denoted by $I_i$.
In the present case the integral of angular momentum is $J_2 = I_2$. 
%The transformation to the normal form gives the global action
%$J_2=I_2$ and the Taylor series of the imaginary action $J_1$ at the focus-focus point.
The semi-global theory is 
valid in a neighbourhood of the pinched torus which is the separatrix of the
equilibrium point. In order to find the semi-global  canonical action $I_1$
(which in this case is actually global up to monodromy)
we introduce spherical coordinates $(\phi, \theta, l)$ by the point transformation
$(x,y,z) = (l\cos\phi\sin\theta, l\sin\phi\sin\theta,l \cos\theta)$.
%and the conjugate momenta by cotangent lift, i.e.\ through multiplication by the inverse 
%transpose of the Jacobian. 
Here the inclination angle $\theta$ is measured down from the north pole.
The resulting Hamiltonian is 
%\footnote{A classical treatment is contained e.g. in Whittaker \cite[paragraph 55(ii)]{Whittaker37}, 
%who treats it as a special case of the motion on a surface of revolution.}
\begin{equation} \label{eqn:Ham}
% H=\frac{1}{2ml^2} \left( p_\theta^2 +  p_\phi^2 \frac{1}{\sin\theta^2} \right) + mgl (\cos\theta-1)    
H =\frac12 \kappa\nu \left( p_\theta^2 +  p_\phi^2 \frac{1}{\sin\theta^2} \right) + \frac{\nu}{\kappa} (\cos\theta-1) \,,
\end{equation}
where $\theta = 0$ corresponds to the maximum of the potential which has energy zero.
Here $p_\phi = L_z = J_2$. This is the most compact form of the Hamiltonian, which
unfortunately has a singularity at $\theta = 0$. 
The singularity can be avoided using singular reduction, see
Cushman and Bates \cite[chapter IV]{CushmanBates97}.
The final action integral is identical in both approaches.
The action integral is obtained  by integrating $p_\theta \dee \theta$ over the real $\cyclereal$ cycle, 
hence
\[
\begin{aligned}
\kappa I_1(h, j_2) & = \frac{1}{2 \pi} \oint_\cyclereal \sqrt{\frac{2\kappa h}{\nu} - 2 (\cos\theta-1) - \frac{\kappa^2 j_2^2}{\sin^2\theta}} \dee \theta  \\
 & = \frac{1}{2\pi }\oint_\cyclereal \sqrt{2(\tilde h -  \zeta+1)(1-\zeta^2) - \tilde j_2^2} \frac{\dee \zeta}{1-\zeta^2} = \tilde I_1(\tilde h, \tilde j_2) \,,
\end{aligned}
\]
where the dimensionless length $\zeta = \cos\theta$ and the abbreviations
$\tilde I_1 = \kappa I_1$, $\tilde j_2 = \kappa j_2$ and $\tilde h = \kappa h/\nu$ for
the dimensionless actions and energy were introduced.
\footnote{These tildes are dropped in the following!}
The non-trivial action $I_1$ of the spherical pendulum is thus defined on the elliptic curve
\[
     \Gamma_{h,j_2} = \{ (\zeta, w) : w^2 = P(\zeta)  \}, \quad 
     P(\zeta) = 2(1-\zeta^2)( h+1-\zeta) -  j_2^2 \,.
\]
Denote the roots of $P$ by $\zeta_i$ where $-1 \le \zeta_0 \le \zeta_1 \le 1 \le \zeta_2$. 
The elliptic integrals needed in the following have been given in terms
of Legendre's standard integrals in e.g.~\cite{RDWW96}.
The modulus and parameters for all of them are
\[
     k^2 = \frac{\zeta_1 - \zeta_0}{\zeta_2 - \zeta_0}, \quad
     n_\pm = \frac{\zeta_1 - \zeta_0}{\pm 1 - \zeta_0} \,.
\]
With these definitions the action $I_1$ can be written as
\begin{align*}
   I_1( h,  j_2) 
    & = \frac{1}{2\pi} \oint_\cyclereal \frac{w}{1-\zeta^2} \, \dee \zeta  \\
    & = c_0( c_1 K(k) + c_2 E(k) + c_{3+} \Pi(n_+, k) + c_{3-} \Pi(n_-, k)  ) 
\end{align*}
where 
$
        c_0 = \frac{4}{\pi \sqrt{2(\zeta_2 - \zeta_0)}}, 
        c_1 =  1 + h - \zeta_2, 
        c_2 = \zeta_2 - \zeta_0, 
        c_{3+} =   \frac{j_2^2}{4(1 - \zeta_0)}, 
         c_{3-} =   \frac{j_2^2}{4(1 + \zeta_0)}  
$.
The $\cyclereal$-cycle encircles the the interval $[\zeta_0, \zeta_1]$ along which $w^2 \ge 0$.
%% done in SphericalPendulumWind.mw and in SpherPendNoFo.mw

\begin{lem} \label{lem:Iseries}
The action of the spherical pendulum near the unstable equilibrium as a function 
of $h$, the energy difference to the critical value, and the angular momentum $j_2$ has the expansion
\begin{equation} \label{Iseries}
\begin{aligned}
  2\pi I_1(h, j_2) & = 8 - 2 \pi | j_2 | + j_2 \Arg(h + \imag j_2 ) + J_1(h,j_2) \ln\frac{32}{\sqrt{h^2+j_2^2}} + \\
 & \quad  +  h + \frac{3}{32}( h^2 + 3 j_2^2) - \frac{h}{256(h^2+j_2^2)}(6h^4 + 43j_2^2h^2 + 39 j_2^4) + \dots
\end{aligned}
\end{equation}
The coefficient in front of the logarithm (the ``imaginary action'') is
\[
 J_1(h,j_2) = h - \frac{1}{16}(h^2 + 3j_2^2) + \frac{3}{256}h(h^2+5j_2^2) - \frac{5}{8192}(5h^4+42 j_2^2h^2 + 21j_2^4) + \dots \,.
\]
\end{lem}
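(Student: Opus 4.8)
The plan is to extract the two asymptotic expansions directly from the closed form of $I_1$ in terms of $K$, $E$, $\Pi$ by carefully tracking the behaviour of the roots $\zeta_0,\zeta_1,\zeta_2$ and the modulus $k$ as $(h,j_2)\to(0,0)$. The crucial observation is that the separatrix (pinched torus) corresponds to $h=j_2=0$, where $P(\zeta)=2(1-\zeta^2)(1-\zeta)=2(1-\zeta)^2(1+\zeta)$ has a double root at $\zeta=1$. Hence as $(h,j_2)\to 0$ the roots $\zeta_1$ and $\zeta_2$ both collide at $1$ while $\zeta_0\to -1$, so $k\to 1$ and the complete integrals $K(k)$, $\Pi(n_-,k)$ develop logarithmic singularities of the type $\ln\bigl(4/\sqrt{1-k^2}\bigr)$, whereas $E(k)\to 1$ and $\Pi(n_+,k)$ stays finite. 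This is exactly the mechanism that produces the $\ln\frac{32}{\sqrt{h^2+j_2^2}}$ term.

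First I would solve for the three roots of $P$ perturbatively in $h$ and $j_2$. Writing $\zeta_2 = 1+\delta_2$, $\zeta_1 = 1+\delta_1$ with $\delta_{1,2}$ small and $\zeta_0 = -1+\delta_0$, and substituting into $P(\zeta_i)=0$, one finds $\delta_1\delta_2 \sim -\tfrac14 j_2^2$ and $\delta_1+\delta_2 \sim h$ to leading order, so that $\delta_{1,2}$ are the two roots of a quadratic whose discriminant is $h^2+j_2^2$; this is the source of $\sqrt{h^2+j_2^2}$ and of $\Arg(h+\imag j_2)$ (the latter arising because $\delta_1$ and $\delta_2$ may become complex conjugates when $h^2 < $ something times $j_2^2$, yet $I_1$ is real). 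I would then expand $k^2 = (\zeta_1-\zeta_0)/(\zeta_2-\zeta_0)$, $1-k^2$, $n_\pm$, and the prefactors $c_0,c_1,c_2,c_{3\pm}$ as series in $h,j_2$. Next, insert the standard expansions of $K(k)$, $E(k)$, $\Pi(n_\pm,k)$ near $k=1$: for $K$ and for $\Pi(n_-,k)$ (whose characteristic $n_-$ also tends to a value that makes $\Pi$ singular, consistent with the $c_{3-}$ coefficient) one uses $K(k) = \ln(4/k') + O(k'^2\ln k')$ with $k'=\sqrt{1-k^2}$, and the analogous incomplete-to-complete reduction for $\Pi(n_-,k)$. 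Collecting the logarithmic contributions, the coefficient multiplying $\ln$ is by construction a power series in $h,j_2$, and comparing it with the normal form $H(J_1,J_2)$ of Theorem~\ref{BNF1} — inverted to express $J_1$ as a function of $h=H/\nu$ and $j_2$ — identifies it as the imaginary action $J_1(h,j_2)$; the stated series is precisely the series inverse of \eqref{hofj} with $J_2=j_2$. The non-logarithmic, non-singular remainder, expanded to the required order, gives the analytic tail $h + \tfrac{3}{32}(h^2+3j_2^2) - \dots$, while the terms $8$, $-2\pi|j_2|$, and $j_2\Arg(h+\imag j_2)$ come from evaluating the regular parts of $E$ and $\Pi(n_+,k)$ at the separatrix together with the branch structure of the colliding roots.

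The main obstacle, and the step I would spend the most care on, is bookkeeping the interaction between the two sources of non-analyticity: the logarithmic singularity of $K,\Pi(n_-)$ as $k\to 1$, and the square-root/argument singularity coming from the two roots $\zeta_{1,2}$ that collide along the complexified direction. One must organise the expansion so that the singular pieces recombine into the clean closed forms $J_1(h,j_2)\ln\frac{32}{\sqrt{h^2+j_2^2}}$ and $j_2\Arg(h+\imag j_2)$ rather than appearing as an unwieldy double series; the natural device is to pass to the complex variables $\zeta_1-\zeta_2$ and $h+\imag j_2$ and treat $\ln(h+\imag j_2) = \ln\sqrt{h^2+j_2^2} + \imag\Arg(h+\imag j_2)$ as a single object, using reality of $I_1$ to pair conjugate contributions. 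A secondary check — which also pins down the additive constant $8$ and the coefficient $32$ inside the logarithm — is to specialise to $j_2=0$, where everything reduces to the ordinary pendulum and the integrals become elementary (one recovers $2\pi I_1 = 8 + h\ln(32/h) + h + \tfrac{3}{32}h^2 - \dots$), and to cross-check the logarithmic coefficient against the independently computed Birkhoff normal form \eqref{hofj}. Once the singular structure is correctly isolated, the remaining work is the routine series manipulation of Legendre integrals, which I would not carry out in detail here.
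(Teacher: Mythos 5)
Your overall route is the paper's route: expand the roots, the modulus and the characteristics in the singular limit where $\zeta_1,\zeta_2$ collide at $1$ (so $k\to 1$), insert the near-separatrix expansions of the Legendre integrals, and sort the result into the $\ln\bigl(32/\sqrt{h^2+j_2^2}\bigr)$ piece, the $|j_2|$ and $\Arg$ pieces, and the analytic tail; the cross-check of the log coefficient against the inverted Birkhoff normal form is exactly the consistency the paper exploits (Theorem~\ref{BNFinvJ} and the residue formula \eqref{resform}), so that part is fine as verification even though it is not needed to prove the lemma itself.

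However, the part you yourself identify as the crux --- the singular bookkeeping of the third-kind integrals --- is misdescribed in a way that would derail the computation as written. First, the roots never become complex: $\delta_1+\delta_2\sim h$ and $\delta_1\delta_2\sim -\tfrac14 j_2^2$ give discriminant $h^2+j_2^2\ge 0$, consistent with the real ordering $-1\le\zeta_0\le\zeta_1\le 1\le\zeta_2$; the $\Arg(h+\imag j_2)$ and $-2\pi|j_2|$ terms do not come from conjugate roots but from the third-kind integrals. Second, the singularity structure is essentially the opposite of what you claim: $n_+\to 1$ \emph{and} $k\to 1$ simultaneously, so $\Pi(n_+,k)$ does not stay finite --- it carries both a logarithmic piece and a power divergence $\sim 1/\sqrt{(1-n_+)(n_+-k^2)}$, tamed only because $c_{3+}=O(j_2^2)$; while $n_-\sim-16/j_2^2$ makes $\Pi(n_-,k)=O(|j_2|)$ with only an $O(j_2^2)\ln$ contribution, and its prefactor $c_{3-}=j_2^2/(4(1+\zeta_0))\to 2$ is \emph{not} small, since $1+\zeta_0\sim j_2^2/8$. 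Because the two sources of non-analyticity are entangled in this doubly-singular (``circular'') regime, the paper does not expand the two $\Pi$'s separately: it first rewrites them via Heuman's $\Lambda_0$ function and combines them with the addition theorem for $\Lambda_0$, obtaining $I_1=c_0\tilde c_1K(k)+c_0c_2E(k)-\tfrac{j_2}{2}\Lambda_0(\varphi,k)$, after which all the logarithms sit in $K$, the $\Lambda_0$ term is bounded and produces the $|j_2|$ and $\Arg$ structure, and the remaining expansion really is routine. Without this (or an equivalent) reorganisation, your proposed term-by-term collection, based on the stated (incorrect) limits of $E$, $\Pi(n_\pm,k)$, would not reproduce the clean closed forms in \eqref{Iseries}.
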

\begin{proof}
For the computation of the expansion of $I_1$ the complete elliptic integrals need to be expanded 
in the singular limit that the modulus $k \to 1$. We set $h \to h \eps$ and 
$j_2 \to j_2 \eps$ and expand in $\eps$. The roots of $P(\zeta)$
in this limit are 
\begin{align*}
   \zeta_0 & = -1 + \frac18 j_2^2 + O(\eps^3) \\
   \zeta_1 & = 1 + \frac12 \left(h - \sqrt{h^2+j_2^2} \right) \left( 1 - \frac{j_2^2}{8\sqrt{h^2+j_2^2}} \right) + O(\eps^3) \\
   \zeta_2 & = 1 + \frac12 \left(h + \sqrt{h^2+j_2^2} \right)  \left( 1 + \frac{j_2^2}{8\sqrt{h^2+j_2^2}} \right)  + O(\eps^3) 
\end{align*}
so that the modulus  becomes
\[
   k^2 = 1 - \frac12 \sqrt{h^2 + j_2^2} + O(\eps^2)\,,
\]
and the parameters are 
\[
   n_+ = 1 + \frac14 \left(h - \sqrt{h^2 + j_2^2}\right) + O(\eps^2), \quad
   n_- = -\frac{16}{j_2^2} + O(1/\eps)\,.
\]
The fact that $n_-$ behaves like $1/j_2^2$ does not cause problems because
$c_{3-}$ has a factor $j_2^2$. 
For the expansion in the present so called `circular' case % \footnote{check with Fletcher, also Heumann himself!?}
the integrals of third kind need 
to be rewritten in terms of 
Heuman's $\Lambda_0$ function, see e.g.\ \cite{BF71}.
Since there are two integrals of third kind some simplification is achieved when
they are combined using the addition law for $\Lambda_0$, again see \cite{BF71}.
The result is 
\[
    I_1 =   c_0 \tilde c_1 K(k)+ c_0 c_2 E(k) - \frac{j_2}{2} \Lambda_0( \varphi, k), 
\]
where 
\[
   \varphi = \pi - \arcsin\sqrt{\frac{j_2^2/2 - \zeta_1- \zeta_0}{\zeta_2 - \zeta_1}}, \quad
    \tilde c_1 = h+1-\zeta_2 - \frac{j_2^2}{4(1+\zeta_2)} % j_2 \sqrt{\frac{\zeta_2-1}{\zeta_2+1}(j_2^2 - 2h - 2 + 2\zeta_2)} \,.
       - \frac{j_2}{2} \sqrt{\frac{(\zeta_2-1)(\zeta_2-\zeta_1)}{2(\zeta_2+1)}}\sin\varphi \,.
\]
From this point onwards the expansion of $I_1$ 
to high order is rather lengthy but straightforward using the formulae
given in \cite{AS,BF71}. For a few more details of a similar calculation in a different system 
with a focus-focus point see \cite{DI03b}.

\end{proof}

The non-smooth term $-2\pi |j_2|$ in \eqref{Iseries} is present in order to 
make the numerical values coincide with the definition as a real integral, 
along with the choice of the principal argument $\Arg$.
In particular $I_1(h,j_2)$ is an even function of $j_2$, whose $j_2$-derivative 
limits to 1 for $h > 0$ and to $1/2$ for $h < 0$.
Since the spherical pendulum has monodromy, integer multiples of $j_2$ 
can be added to $I_1$ in order to make it locally smooth. When $I_1$ is made
smooth in this way it becomes globally multivalued. 
Making use of the freedom to add integer multiples of $j_2$ 
the term could also be omitted.

The function $J_1$ (which multiplies the singular $\ln$-term in $I_1$) is also a complete elliptic integral over the same differential $p_\theta \dee \theta$, 
but along the $\cycleimag$-cycle that is vanishing in the limit approaching the equilibrium. 
The $\cycleimag$-cycle is a closed cycle in the original $\theta$ variables. 
When the integral is rationalised by $z = \cos\theta$ this cycle is folded once over itself, 
so that in the $\zeta$-variables it must be multiplied by 2.
This factor of two is absorbed in our definition of $\cycleimag$, 
which traverses the cycle twice in the $\zeta$ variables. 
With this convention we have
\[
    J_1(h,j_2) = \frac{1}{2\pi \imag} \oint_\cycleimag \frac{w}{1-\zeta^2} \, \dee \zeta 
\]
where $\cycleimag$ goes twice around the interval $[\zeta_1, \zeta_2]$ along which $w^2 \le 0$.
In the limit $\epsilon \to 0$ these roots coalesce at 1.
Since $w$ is pure imaginary along this path we multiply by $-\imag$ to make $J_1$ real
(by abuse of notation nevertheless we call $J_1$ the imaginary action).

As a real integral $J_1$ is divergent, but as a complex integral it is well 
defined, and it can be transformed to Legendre normal form.
The real expression in this so called `hyperbolic' case can best be expressed 
in terms of Jacobi's Zeta function \cite{AS}.
% The result is \[   J_1 = a K + b E + \frac{j_2}{\pi} (Z_+ + Z_-) \]
% \footnote{do this? yes, and in particular also for the derivative with respect to h, that gives $A$, see appendix}.

Instead of using the complicated Legendre normal form, 
for $J_1$ the series expansion can be found more easily from 
an expansion of the integrand. In contrast to the integral $I_1$ an expansion works for $J_1$
because the integration cycle is vanishing in the expansion limit.
The residue of the differential form $\omega = w/(1-\zeta^2) \dee \zeta$ at $\zeta = \pm 1$ is $\mp |j_2|/2$.
When $h = j_2 = 0$ the polynomial factors as $P(\zeta) = 2(\zeta+1)(\zeta-1)^2$ so that
in this limit the pole of $\omega$ at $\zeta = 1$ cancels. 
When $h$ and $j_2$ are both small the differential $\omega$ can be expanded
first into a Taylor series in $\epsilon$ after setting $h \to h \epsilon$ 
and $l \to l \epsilon$, and second in a Laurent series at $\zeta = 1$. 
Each term is polynomial in $h, j_2$ and rational in $\zeta, \sqrt{\zeta+1}$ 
so that the residue at $\zeta=1$ of each term can be easily computed. 
Combining the two expansions an explicit formula for the coefficient of terms of 
degree $n$ in $h, j_2$ is found to be
\begin{equation} \label{resform}
   2  \frac{1}{(n-1)!} \frac{\partial^{n-1}}{\partial \zeta^{n-1}} \left(\left.   
          \frac{(\zeta-1)^{n}}{n!}\left. \frac{\partial^n}{\partial \epsilon^n} \frac{w}{1-\zeta^2} \right|_{\epsilon = 0}
    \right)\right|_{\zeta=1} \,.
\end{equation}
This gives the degree $n$ term in the series of $J_1(h, j_2)$ as stated in lemma~\ref{lem:Iseries}.
The factor of two appears because $\cycleimag$ is traversed twice in the $\zeta$ variables.

In a semiclassical context $J_1/2$ may have the interpretation of a tunnelling integral, which is used in uniform WKB quantisation across a separatrix.
For a tunnelling integral the integration path in the original variable $\theta$ is not closed, 
but goes from one side of the energetically allowed region to the other.
For our imaginary action $J_1$ the cycle is closed in the original variable $\theta$, 
thus the factor of $1/2$ appears when considering the tunnelling integral.
The interpretation of the imaginary action as a tunnelling integrals does hold 
for the ordinary pendulum, which essentially is obtained from the spherical pendulum
by setting $j_2=0$ (see section~\ref{pendulum}).
For the spherical pendulum, or in general for a system with a focus-focus point, however, the uniform semiclassical quantisation is more complicated, see \cite{Child98,VuNgoc00}.
In \cite{RDWW96} a semiclassical treatment of the spherical pendulum with an added cardanic frame
was considered. In its uniform quantisation tunnelling integrals are used, 
however, they diverge in the limit of vanishing frame.
Thus the analogy of the imaginary action (the integral over the $\cycleimag$-cycle) 
with the tunnelling integral is rather metaphorical for the spherical pendulum.

From the classical point of view the interesting observation is that $J_1(h, j_2)$ is the inverse of 
$H(j_1, j_2) = h$ with respect to $j_1$, so that $H(J_1(h,j_2),j_2) = h$ is an identity. 
%The inverse of $J_1$ can thus be found from the Birkhoff normal form 
%order by order assuming a polynomial form in $j_1$ and $j_2$.
%
Thus we have

\begin{theo} \label{BNFinvJ}
The Birkhoff normal form near the focus-focus point of the spherical pendulum
as given in Theorem~\ref{BNF1}
is also given by the inverse of the elliptic integral $J_1(h,j_2)$ (over the $\alpha$-cycle which is vanishing near the 
focus-focus equilibrium point) with respect to its first argument.
\end{theo}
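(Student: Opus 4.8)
The plan is to exhibit a single identity that ties the two power series together, namely $H(J_1(h,j_2),j_2)=h$, and then argue that this identity determines each series from the other uniquely. The starting point is the observation that both $H(j_1,j_2)$ from Theorem~\ref{BNF1} and $J_1(h,j_2)$ from Lemma~\ref{lem:Iseries} arise as complete elliptic integrals of the \emph{same} differential form $p_\theta\,\dee\theta = w/(1-\zeta^2)\,\dee\zeta$ on the curve $\Gamma_{h,j_2}$, one over the real cycle $\cyclereal$ (giving the canonical action $I_1$, whose logarithmic coefficient is $J_1$) and one over the vanishing cycle $\cycleimag$. The key structural fact is that on the curve $\Gamma_{h,j_2}$ the energy $h$ and the local action $j_1$ are not independent: the Birkhoff normal form of Theorem~\ref{BNF1} is precisely the statement that, near the focus-focus point, $h = \nu H(j_1,j_2)$ holds as a convergent relation between $h$, $j_1 = J_1$, and $j_2$. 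So the first step is to make explicit that the quantity $J_1(h,j_2)$ computed in Lemma~\ref{lem:Iseries} as the $\cycleimag$-period is the local Eliasson action $j_1$ itself, evaluated on the level set $\{H=h,\,L_z=j_2\}$; this is exactly what the convergent normal form theorems of \cite{Vey78,Eliasson84,Ruessmann87,Ito89} guarantee, since $j_1$ is by construction the action conjugate to the hyperbolic angle and hence a period integral over the vanishing cycle.

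Second, I would verify the inversion at the level of the two displayed power series. Substituting the series for $J_1(h,j_2)$ from Lemma~\ref{lem:Iseries} into the right-hand side of \eqref{hofj} in place of $J_1$, and expanding in the joint degree in $(h,j_2)$, one should recover $h$ identically. Conversely, solving $H(j_1,j_2)=h/\nu$ for $j_1$ as a formal power series in $h,j_2$ — which is possible because $\partial H/\partial J_1 = \nu \neq 0$ at the origin, so the implicit function theorem applies order by order — must reproduce the series for $J_1(h,j_2)$. Because $\partial H/\partial J_1(0,0)=\nu\neq 0$, the formal inverse with respect to the first argument exists and is unique, so the two series determine each other; it therefore suffices to check agreement up to the order to which both are stated, and to invoke the uniqueness of the formal compositional inverse for the rest.

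The cleanest way to close the argument without a term-by-term check is the period-integral route: both $j_1$ (the $\cycleimag$-period) and $h$ are functions of the pair of integrals of motion, related by $j_1 = J_1(h,j_2)$; meanwhile the convergent Birkhoff normal form says the Hamiltonian, expressed in Eliasson coordinates, is the function $\nu H(j_1,j_2)$ of Theorem~\ref{BNF1}; evaluating this on a joint level set gives $\nu H(J_1(h,j_2),j_2)=h$, which is the assertion. So the structure of the proof is: (i) identify $J_1(h,j_2)$ of Lemma~\ref{lem:Iseries} with the local action $j_1$ via the fact that it is the $\cycleimag$-period of $p_\theta\,\dee\theta$ and $\cycleimag$ vanishes at the equilibrium; (ii) invoke the convergent normal form to write $h = \nu H(j_1,j_2)$; (iii) combine to get the identity $H(J_1(h,j_2),j_2)=h$; (iv) note $\partial_1 H\neq 0$ so the first-argument inverse is unique, hence $J_1$ \emph{is} that inverse.

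The main obstacle is step (i): one must be careful that the $J_1$ defined analytically in Lemma~\ref{lem:Iseries} — the coefficient of the $\ln$ term in the regularised canonical action, normalised with the factor of two for the doubled $\zeta$-cycle and with the sign/branch choices making it real — coincides on the nose with the Eliasson action $j_1$ produced by the Lie-series normalisation of Theorem~\ref{BNF1}, including the overall normalisation (no extra constant or factor). This is essentially the assertion that the period of $p_\theta\,\dee\theta$ over the vanishing cycle is the correct symplectic action conjugate to the hyperbolic "angle" $\theta_1$; it follows from the general semi-global theory of \cite{VuNgoc03} together with the convergence results cited, but pinning down that the normalisations match (rather than differing by, say, a factor stemming from the $2\pi$ or the doubled cycle) is where the actual care is needed. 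Once that identification is secured, the rest is the elementary observation that a power series with non-vanishing linear coefficient in its first variable has a unique formal inverse.
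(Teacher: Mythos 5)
Your steps (ii)--(iv) are essentially bookkeeping: once one knows that the quantity $J_1(h,j_2)$ defined by the $\cycleimag$-cycle integral coincides with the Eliasson/Birkhoff variable $j_1$ of Theorem~\ref{BNF1}, the identity $H(J_1(h,j_2),j_2)=h$ and the uniqueness of the inverse (since $\partial H/\partial j_1\neq 0$ at the origin) are immediate. The problem is that your step (i) -- the identification itself -- is exactly the content of the theorem, and you do not prove it; you assert that it ``follows from the general semi-global theory of Vu~Ngoc together with the convergence results cited, since $j_1$ is by construction the action conjugate to the hyperbolic angle and hence a period integral over the vanishing cycle.'' Neither citation delivers this. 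The convergence theorems of Vey, Eliasson, R\"ussmann and Ito guarantee the existence of a convergent normalising transformation, but say nothing about the normal-form variable being computable as a period integral on the complexified level curve; Vu~Ngoc's semi-global theory is formulated in the smooth category and concerns the real action $I_1$ and its logarithmic singularity, not the complex $\cycleimag$-cycle integral. Indeed the abstract of this paper advertises precisely this identification as one of its results, so it cannot be imported from the references. Your fallback -- check the two displayed series against each other to the order shown and ``invoke uniqueness of the formal inverse for the rest'' -- does not close the gap either: uniqueness of the inverse only helps once the identity $H(J_1(h,j_2),j_2)=h$ is known to all orders, which is what has to be established.

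The paper's proof supplies the missing step constructively: it repeats the classical generating-function construction of action-angle variables, $S(J,q)=\int^q p\,\dee q$ with $p$ solved from $H(q,p)=h$, but over a closed loop on which $p$ is purely imaginary, normalised by $\oint\dee\phi=2\pi\imag$ so that $\oint p\,\dee q/(2\pi\imag)=J$. It then checks that for the quadratic saddle $H=\tfrac12(a^2p^2-b^2q^2)$ this reproduces exactly the $(J_1,\theta_1)$ coordinates of lemma~\ref{integral-angle}; since the Hamiltonian becomes a function of $J$ (and $j_2$) alone and the normal form is unique by lemma~\ref{nofo}, the variable so constructed \emph{is} the Birkhoff variable, with the correct $2\pi\imag$ normalisation, and contour deformation shows the vanishing $\cycleimag$-cycle computes the same integral. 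Some argument of this kind -- an explicit symplectic construction pinning down the normalisation, not a citation -- is what your step (i) needs.
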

\begin{proof}
After reduction, the spherical pendulum with one degree of freedom $\theta$ is given by \eqref{eqn:Ham}.
The dependence on the parameter $j_2$ is ignored in the following.
Recall that the transformation to (ordinary) action angle variables near an elliptic equilibrium point in one degree of freedom 
can be found from the mixed variables generating function $S(I, q) = \int^q p \, \dee q$, where $p$ is obtained by solving $H(p,q) = h(I)$ for $q$. 
This generating function is fixed by the requirement $\oint \dee \phi = 2 \pi$, 
and this gives $\oint p \, \dee q / ( 2\pi ) = I$ where the closed loop integrals are performed over the 
real solution of the equation $H(q,p) = h$ (see, e.g.~\cite{Arnold78}).
Thus the series expansion of the inverse of $I(h)$ gives the Birkhoff normal form at the elliptic equilibrium point.

The only modification we have to make to adapt this construction to our case of 
a hyperbolic equilibrium point is to allow a closed loop integral over a
complex solution of the equation $H(q,p) = h$. In particular consider a Hamiltonian of the form $p^2/2 + V(q)$ and 
consider a solution for which $p$ is purely imaginary, 
i.e. the real integration range in $q$ is an interval where no classical motion is allowed. 
The corresponding normalisation then is $\oint \dee \phi = 2 \pi \imag$, and 
accordingly $\oint p \, \dee q / ( 2 \pi \imag) = J$. The variable $\phi$ that is conjugate to $J$ is 
defined by $\phi = \partial S/\partial J$ as before. 
% Now, however, $q$ is in the range that makes $p$ pure imaginary.
Performing this construction at the quadratic hyperbolic equilibrium $H = \frac12( a^2 p^2 - b^2 q^2)$
reproduces the transformation to $(J_1, \theta_1)$ as described in lemma~\ref{integral-angle}.
In the general case we have constructed $J_1(h)$, whose inverse gives the Birkhoff normal form 
at the hyperbolic equilibrium point.

Finally there is nothing special about a closed non-contractible loop that has real $q$ and imaginary $p$;
in the complex plane we can deform this path without changing $J$. In particular when the energy 
is above the energy of the saddle point of the Hamiltonian a path that encircles critical points with 
imaginary $q$ and real $p$ also works.
\end{proof}

The main advantage of using this theorem for the computation of the Birkhoff normal form 
instead of the Lie-series approach is that it is much simpler to first compute $J_1(h, j_2)$ 
from the local expansion of the corresponding integral using the residue expansion \eqref{resform}, 
and then find the Birkhoff normal form by inversion of the series.

\section{Semi-global Symplectic Invariants}

From the general theory \cite{VuNgoc03} it is known that the non-trivial action
near a non-degenerate focus-focus point has the following form
\[  %\begin{equation}
\begin{aligned}
  2\pi I_1(j_1, j_2) & = 2\pi I_{10} - \Re( \hat j \ln \hat j - \hat j ) + S(j_1, j_2)   
%         & = 2\pi I_{10}  - j_1 \ln | \hat j | + j_2 \arg \hat j + j_1 + S(j_1, j_2) \,,
\end{aligned}
\] % \end{equation}
where $I_{10}$ is a constant and $\hat j = j_1 + \imag j_2$.
% Here we have chosen the standard branch of the complex logarithm. 
Choosing a different branch of the complex logarithm changes $I_1$ by an integer multiple
of $j_2$, and hence is a manifestation of the Hamiltonian monodromy 
of the focus-focus point. In the following theorem two different choices
of the branch cut are made depending on the sign of $j_2$, so that 
the result is consistent with \eqref{Iseries}.
%
%This series gives $I_1$ as a function of $h$ and $j_2$.
%Now the Birkhoff normal form (\ref{hofj})  can be used to express $h$  
%in terms of $j_1$ and $j_2$ and thus gives $I_1(j_1, j_2)$.
%In this way we obain the expansion of the canonical action and 
%the regular part of the series gives the semi-global symplectic invariant.
%
\begin{theo} \label{theo:invariants}
The non-trivial canonical action $I_1$ of the sphercial pendulum near the unstable focus-focus point as a function 
of $j_1, j_2$  is given by
\begin{equation}  \label{eqn:Action}
% 2\pi  I_1(j_1, j_2)  = 8 - 2\pi  |j_2| +  j_2 \Arg \hat j + j_1 \ln \frac{32}{|\hat j|} +  j_1 + \tilde S(j_1, j_2)
2\pi  I_1(j_1, j_2)  = 8 - 2\pi  |j_2| +  j_2 \Arg \hat j - j_1 \ln |\hat j| +  j_1 + S(j_1, j_2)
 \end{equation}
 where the semi-global symplectic invariant $ S(j_1, j_2) $  is given by 
% $ S(j_1, j_2) = j_1 \ln 32 + \tilde S(j_1, j_2)$ with
 \[
 S(j_1, j_2) =  j_1 \ln 32 + \frac{3}{32}(j_1^2 + 3j_2^2) 
    - \frac{1}{512}j_1 ( 5j_1^2 + 51j_2^2)
    + \frac{1}{32768} ( 55 j_1^4 + 1230 j_1^2 j_2^2 + 271 j_2^4) + \dots
\]
\end{theo}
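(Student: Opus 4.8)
The plan is to combine the two expansions already established in the excerpt: the Birkhoff normal form $H(j_1,j_2)$ from Theorem~\ref{BNF1} (equivalently its inverse, the imaginary action $J_1(h,j_2)$, via Theorem~\ref{BNFinvJ} and Lemma~\ref{lem:Iseries}), and the expansion of the canonical action $I_1(h,j_2)$ from Lemma~\ref{lem:Iseries}. The key observation is that \eqref{Iseries} expresses $2\pi I_1$ in terms of the \emph{energy-like} variable $h$ and $j_2$, whereas the general form from \cite{VuNgoc03} recalled just before the theorem requires $I_1$ to be written in terms of the \emph{normal-form action} variable $j_1$ (with $\hat j = j_1 + \imag j_2$). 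So the first and central step is the change of variables $h \mapsto j_1$ given by $j_1 = J_1(h,j_2)$, i.e. we substitute $h = H(j_1,j_2)$ (the series from \eqref{hofj} with $\nu$ scaled out, in the tilded/dimensionless variables) into \eqref{Iseries}.

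First I would isolate the singular part. The term $J_1(h,j_2)\ln\frac{32}{\sqrt{h^2+j_2^2}}$ in \eqref{Iseries}, after substituting $j_1 = J_1(h,j_2)$, becomes $j_1\ln 32 - j_1 \ln\sqrt{H(j_1,j_2)^2 + j_2^2}$. One then has to show that $j_1\ln\sqrt{H^2+j_2^2}$ differs from $j_1 \ln|\hat j| = \tfrac12 j_1\ln(j_1^2+j_2^2)$ only by an analytic (in $(j_1,j_2)$) correction: writing $H(j_1,j_2) = j_1 + j_1^2(\dots)$, one has $H^2 + j_2^2 = (j_1^2+j_2^2)(1 + \text{analytic})$ because the lowest-order correction is $2 j_1 \cdot j_1^2(\dots)/(j_1^2+j_2^2)$, which is analytic since $j_1^3/(j_1^2+j_2^2)$ is (it vanishes to order $1$ at the origin and is bounded). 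So $-j_1\ln\sqrt{H^2+j_2^2} = -j_1\ln|\hat j| + (\text{analytic})$, and this analytic remainder is folded into $S$. Similarly the term $j_2\Arg(h+\imag j_2)$ must be compared with $j_2 \Arg\hat j$; since $h + \imag j_2 = H(j_1,j_2) + \imag j_2 = \hat j(1 + \text{analytic})$ with the correction vanishing at the origin, $\Arg(h+\imag j_2) = \Arg\hat j + \Im\ln(1+\text{analytic})$, and $j_2$ times that correction is again analytic and absorbed into $S$. The non-smooth pieces $8 - 2\pi|j_2| + j_2\Arg\hat j - j_1\ln|\hat j|$ then match \eqref{eqn:Action} exactly, and everything left over is the analytic series $S(j_1,j_2)$.

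The remaining step is the honest bookkeeping: collect the linear-and-higher analytic terms. These come from (i) the explicit analytic tail $h + \frac{3}{32}(h^2+3j_2^2) - \dots$ of \eqref{Iseries} with $h = H(j_1,j_2)$ substituted, (ii) the analytic remainder of $-j_1\ln\sqrt{H^2+j_2^2}+j_1\ln|\hat j|$, and (iii) the analytic remainder of $j_2\Arg(h+\imag j_2) - j_2\Arg\hat j$, then subtract the bare $j_1$ that appears explicitly in \eqref{eqn:Action}. Expanding each to the order $j^4$ and summing should reproduce $S(j_1,j_2) = j_1\ln 32 + \frac{3}{32}(j_1^2+3j_2^2) - \frac{1}{512}j_1(5j_1^2+51j_2^2) + \frac{1}{32768}(55j_1^4 + 1230 j_1^2 j_2^2 + 271 j_2^4) + \dots$. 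A useful internal consistency check is the $j_2 = 0$ restriction, which must agree with the ordinary-pendulum invariant discussed in the later section; and one can check the quadratic coefficient $\frac{3}{32}$ directly against the $\frac{3}{32}(h^2+3j_2^2)$ term of \eqref{Iseries} since at quadratic order $h = j_1$.

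I expect the main obstacle to be purely organizational rather than conceptual: tracking which of the several $\ln$- and $\Arg$-contributions produce genuinely analytic corrections versus which reconstruct the singular skeleton, and doing so consistently with the sign/branch conventions for $\Arg$ and $|j_2|$ fixed in the discussion after Lemma~\ref{lem:Iseries} (two branch choices depending on $\mathrm{sgn}(j_2)$). The one genuine subtlety worth stating carefully is \emph{why} expressions like $j_1^3/(j_1^2+j_2^2)$ and $j_1 j_2^2/(j_1^2+j_2^2)$ count as analytic contributions to $S$: they are homogeneous of degree $1$, smooth away from the origin, and bounded, hence contribute to the Taylor-type expansion of the invariant in the sense used by Vu Ngoc — this is exactly the mechanism by which the apparently $h$-dependent rational prefactors in \eqref{Iseries} (e.g. $\frac{h}{256(h^2+j_2^2)}(6h^4+43j_2^2h^2+39j_2^4)$) turn into polynomial coefficients in $(j_1,j_2)$ after the substitution.
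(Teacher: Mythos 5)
Your overall route is the same as the paper's: substitute the Birkhoff normal form $h=H(j_1,j_2)$ from \eqref{hofj} into the expansion \eqref{Iseries}, use $J_1(H(j_1,j_2),j_2)=j_1$ so that the coefficient of the logarithm becomes $j_1$, and declare everything left over after peeling off $8-2\pi|j_2|+j_2\Arg\hat j-j_1\ln|\hat j|+j_1$ to be $S$. That is exactly the computation behind Theorem~\ref{theo:invariants}.

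However, your justification of the key step --- \emph{why} the leftover is an analytic series --- contains a genuine error. First, $H(j_1,j_2)-j_1$ is not of the form $j_1^2(\dots)$: by \eqref{hofj} it equals $\tfrac{1}{16}(j_1^2+3j_2^2)+O(3)$, so the correction inside the logarithm is not the one you wrote. Second, and more importantly, functions such as $j_1^3/(j_1^2+j_2^2)$ or $j_1j_2^2/(j_1^2+j_2^2)$ are \emph{not} analytic (indeed not even $C^1$) at the origin; a bounded homogeneous function of degree one that is not linear cannot have a Taylor expansion there, so "homogeneous, bounded, smooth away from the origin" is not a valid reason for absorbing such terms into $S$. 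In fact your groups (ii) and (iii) are individually non-analytic already at quadratic order: the correction to $-j_1\ln\sqrt{H^2+j_2^2}$ starts with $-j_1^2(j_1^2+3j_2^2)/\bigl(16(j_1^2+j_2^2)\bigr)$ and the correction to $j_2\Arg(h+\imag j_2)$ starts with $-j_2^2(j_1^2+3j_2^2)/\bigl(16(j_1^2+j_2^2)\bigr)$; only their \emph{sum} collapses to the polynomial $-\tfrac{1}{16}(j_1^2+3j_2^2)$, and at higher order these pieces must further cancel against the rational prefactors of the explicit tail of \eqref{Iseries} (such as the $h(6h^4+43j_2^2h^2+39j_2^4)/(256(h^2+j_2^2))$ term). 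So the term-by-term argument you propose would fail; the analyticity of $S$ is a cancellation across all contributions, which the paper does not prove termwise either: it is guaranteed by the general theory of Vu Ngoc \cite{VuNgoc03} ("the miracle") and confirmed by carrying out the full substitution and expansion to the displayed order. If you replace your analyticity claims by an appeal to that general result (or simply by exhibiting the explicit expansion through degree four and observing that the non-analytic homogeneous pieces cancel), the rest of your bookkeeping, including the $j_2=0$ check against the ordinary pendulum, goes through as in the paper.
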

\begin{proof}
We have already noticed in Theorem~\ref{BNFinvJ} that $J_1(h, j_2)$ is the inverse of the Birkhoff normal 
form, and thus inserting the normal form \eqref{hofj} into the series expansion of 
the action \eqref{Iseries} produces the term $-j_1 \ln | \hat j|$, as desired.
The miracle in this computation is that the coefficients of the regular part of $I_1(h, j_2)$ as obtained
from the expansion of the elliptic integral are {\em rational} functions of $h$ and $j_2$, 
see \eqref{Iseries},
but upon substitution of the Birkhoff normal form \eqref{hofj} they turn into {\em polynomials}
in $j_1$ and $j_2$. The general theory \cite{VuNgoc03} predicts this result,
but it seems to be hard to see this otherwise.
The resulting regular power-series part $S(j_1, j_2)$ of the series $I(j_1, j_2)$ is the semi-global symplectic 
invariant of the spherical pendulum at its focus-focus point.
\end{proof}

{\bf Remarks:} 
1) Setting $j_2 = 0$ the semi-global symplectic invariants of the ordinary pendulum  at its unstable  equilibrium point are obtained.
2) As noted in lemma~\ref{nofo}, $H$ is a function of $j_2^2$ only, and the same action $I_1$ is also even in $j_2$.
Thus the discrete symmetry of the Hamiltonian becomes a discrete symmetry of $S$.
3) The coefficients of $S$ (except for $\ln 32$) appear to become integers when each momentum is
scaled by 32. The fact that there are rational coefficients appears to be related to 
the fact that the spherical pendulum is analytic.
4) 
The maximal absolute error between the action integral $I_1(h, j_2)$ and the approximation $I_1(j_1, j_2)$ given in Theorem~\ref{theo:invariants}
together with the formula for $J_1(h, j_2)$ of lemma~\ref{lem:Iseries} using only the terms displayed in these formulas is less than
0.0001 inside a circle of radius $1/2$ centered at the critical point $(h,l) = (0,0)$, 
and less than 0.0032 inside a circle of radius $1$.
5) In this and the previous section we are using scaled quantities which absorb the constants $\nu$ and $\kappa$. 
Unscaling shows that $\nu$ does not appear in the symplectic invariant, while $1/\kappa$ with units of an action naturally does appear
in order to give $S$ the correct units of an action. But the unscaling also gives the term $j_1 \ln \hat j \kappa$ in $I_1$, and one could 
argue that expanding the $\ln$ gives another contribution to $S$ as $j_1 \ln \kappa$. However, the argument of 
$\ln$ should be dimensionless, and thus one should leave the $\kappa$ inside, and do not incorporate it into the symplectic invariant.

In \cite{DVN03} it was shown that the rotation number has the form
\[ % \begin{equation} % \label{rotnum}
   2\pi W(j_1, j_2) = -\arg \hat j - A \ln |\hat j| + A S_1 - S_2 \bmod 2 \pi \,,
\] % \end{equation}
where $S_i = \partial S/\partial j_i$. % \footnote{check this!}
The function $A$ can be computed from the Birkhoff normal form of $H$ given 
in \eqref{hofj} as
$A = \partial_{j_2} H/ \partial_{j_1} H$.
Notice that for an elliptic equilibrium point where both $j_i$ are true actions 
the function $A$ would be the rotation number itself. In the present case it is a
major ingredient of the rotation number, but the rotation number also depends on the
foliation via the semi-global symplectic invariant $S$, and it has leading order 
singular terms which are universal for focus-focus points.
After expansion of the quotient the result for $A$ is
 \begin{equation} \label{Aofj}
 A(j_1, j_2) = \frac38 j_2 - \frac{15}{128}j_1 j_2 + \frac{15}{1024} j_2( 3j_1^2 + 2j_2^2) 
 - \frac{45}{65536} j_1 j_2 (25 j_1^2 + 43 j_2^2) + \dots 
 \end{equation}
In order to obtain the rotation number given by the 
complete elliptic integral obtained from $-\partial I_1(h, j_2)/ \partial j_2$ 
the necessary choice of branch cut gives
\[
   2\pi W(j_1, j_2) = - \frac{ \partial I_1(h, j_2)}{ \partial j_2 } = 2\pi  \sgn j_2  - \Arg \hat j - A \ln |\hat j| + A S_1 - S_2 \,.
\]
The additional term preserves the property that $W$ is odd in $j_2$ and 
 that the limiting values on the axis $j_2 = 0$ 
are $\pm 1$ for $j_1 > 0$ 
and $\pm 1/2$ for $j_1 < 0$, 
since  for $j_2 = 0$ we have $A = S_2 = 0$.
This gives a very compact formula for the rotation number of the spherical pendulum
near the focus-focus point.
\footnote{ In Cushman and Bates \cite{CushmanBates97} the rotation number $W$ is an even function
of $j_2$, which is obtained from our odd $W$ by taking the absolute value. Thus $W$ becomes continuous
and odd.
}

The negative rotation number is obtained from the action by partial differentiation with respect to $j_2$ keeping $h=const$. 
Differentiating \eqref{Iseries} we see that the function $A$ multiplying the $\ln$-term in the rotation number 
is $\partial J_1(h,j_2) / \partial j_2$. 
This shows that $A$ is the integral over the same differential as $W$, 
but instead over the vanishing cycle. 
Hence $A$ is a complete elliptic integral as well.
Moreover, this shows that the series of $A(h,j_2)$ can be found by differentiating
the series of $J_1(h, j_2)$ with respect to $j_2$.
Another even simpler pair of functions related to $(I_1, J_1)$ are the periods of motion obtained 
by differentiating $(I_1(h,j_2), J_1(h,j_2))$ with respect to $h$, instead of with respect to $j_2$. 
In all cases the series expansions of the integrals 
over the vanishing cycle can be easily found by expanding and computing 
residues as described for $J_1$.

\section{Dynamical Interpretation}

The physical interpretation of the rotation number is easily understood when 
considering a rational rotation number $W = p/q$. This means that the orbit
completes $q$ periods in the inclination angle $\theta$, while it completes 
$p$ periods of the azimuthal angle $\phi$ around the pole.

From a dynamical point of view the most interesting quantities that have simple interpretations
are derivatives of the action $I_1(h,j_2)$ which give the period of the (symmetry reduced) motion 
and the rotation number. The period is found to be 
\[
T(j_1, j_2) = % \frac{\partial I_1(j_1(h, j_2), j_2)}{\partial h} =  
	\frac{ \partial I_1/\partial j_1 }{\partial H/\partial j_1}
             =  \frac{ -\ln|\hat j| + S_1}{\partial H/ \partial j_1} = \frac{ \ln \frac{32}{|\hat j|} + \frac{3}{16} j_1 + O(2) }{ 1 + \frac18 j_1 + O(2)} 
\]
where $S_i = \partial S/\partial j_i$. % \footnote{check this!}
The period diverges logarithmically when approaching the unstable equilibrium point at $\hat j = 0$.
The $j_1$ derivative of the symplectic invariant $S$ gives the smooth correction to this singular behaviour, 
when the factor $1/(\partial H/\partial j_1)$ multiplying the singular term $\ln |\hat j|$ is factored out.
The original construction in \cite{VuNgoc03} achieves the splitting into singular and smooth contribution
using two Poincar\'e sections transverse to the stable respectively unstable manifold of the equilibrium.
This divides the separatrix into an inner and an outer part, where the inner part approaches the 
equilibrium point for $|\hat j| \to 0$.
This gives an interpretation to the coefficient $\ln 32$ of $j_1$ in $S$, which is the leading order term 
of the time it takes to traverse the outer part of the separatrix.

In the plane of $(j_2, j_1)$ we now introduce polar coordinates by $j_2 = r \cos s$, $j_1 = r \sin s$.
The roots become 
$\zeta_0 = -1 + \frac18 r^2 \cos^2 s  + O(r^3)$ and
$\zeta_{1,2} = 1 + \frac12(1 \pm \sin s)(\pm r)( 1 \pm \frac{1}{16} r ) + O(r^3)$.
Recalling that $\zeta = \cos\theta$ shows that the minimal distance 
of the orbit to the north pole 
measured in the inclination angle $\theta$ 
is $ \sqrt{r(1- \sin s)}  + O(r^{3/2})$, 
while the minimal distance to the south pole is $ r|\cos s|/2 + O(r^2)$.
For $s \to \pi/2$ the angular momentum $j_2$ vanishes and both distances vanish.
For $s \to -\pi/2$ the angular momentum also vanishes, but since $j_1 \approx h < 0$
the north pole is still inaccessible. 
In figure~\ref{Wrat} the radius of the excluded region in the centre of each picture is $\sqrt{r(1- \sin s)}/2$ to 
leading order (the factor 1/2 comes from the stereographic projection), while the overall size is
$4/( r |\cos s|)$ to leading order.

It is interesting to note that to leading order the modulus 
$k^2 = 1 - r/2 + \frac{1}{32}( 4 + 3 \sin s) r^2 + O(r^3)$ is independent of $s$.
The leading order of the rotation number $W$ for $j_2 \ge 0$ in polar coordinates is
\[
  W = \frac34 + \frac{s}{2\pi} + \frac{3r  \cos s}{32 \pi}  \, ( 2 \ln \frac{32}{r} - 3  ) + O(r^2) \,.
\]
This clearly shows that along the half-circle $s \in [-\pi/2, \pi/2]$ in the right half-plane with $j_2 \ge 0$
the rotation number changes from $W = 1/2$ to $W = 1$, so by continuity every rotation number in between occurs at least once on 
each such half-circle. 
At least for small $r$ every rotation number occurs {\em exactly} once along the half-circle with radius $r$.

\begin{figure}
\centerline{ \includegraphics[width=15cm]{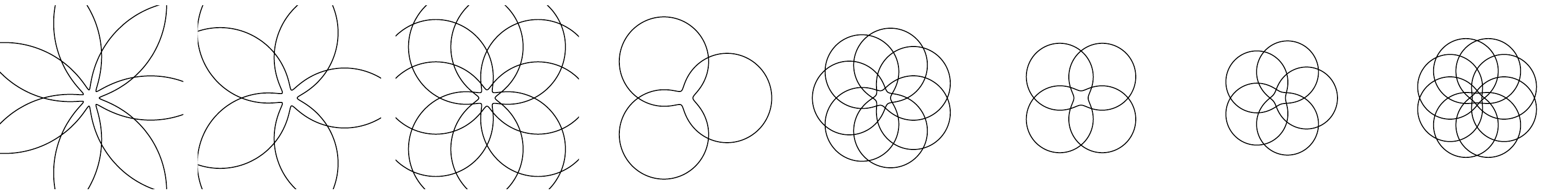} }
\caption{Periodic orbits of the spherical pendulum in stereographic projection of the sphere fixing the unstable equilibrium point 
with initial conditions along the circle in $(j_1, j_2)$ space with radius $0.75$ with rotation numbers
$4/7, 3/5, 5/8, 2/3, 5/7, 3/4, 4/5, 7/8$. Individual pictures are shown at the same scale cropped to $[-8, 8]^2$.}
\label{Wrat}
\end{figure}

Another quantity that is also interesting is the twist ${\cal T}$ given by the derivative of the rotation number
with respect to $j_2$ for constant $h$. The zeroes of this function indicate vanishing
twist, see \cite{DVN03}. Using the general formula derived there for the spherical pendulum we find
% \footnote{ notation $W_1$ is confusing, since we use both $W(j_1, j_2)$ and $W(h, j_2)$. General problem!}
%\footnote{So what? Can we find the vanishing twist using this? No really, since it 
%is not smooth at the origin, Taylor expansion does not work.}
\[
         {2 \pi \cal T}(j_1, j_2) = 2 \pi ( - A W_1 + W_2) =  - \frac{j_1}{ |\hat j|^2} - \frac{9}{16} - \frac{3j_2^2}{4 |\hat j|^2 } + \frac{3}{8} \log \frac{32}{|\hat j|} + O(j_1, j_2)
         \,.
\]
where $W_i = \partial W(j_1, j_2) /\partial j_i$.
In polar coordinates the twist becomes
\[
  2\pi {\cal T} = - \frac{\sin s}{r} + \frac{3}{8} \ln \frac{32}{r} -\frac{15}{16} - \frac{3}{8} \cos 2s + O(r) \,.
\]
This shows that for $r \to 0$ to leading order the twist vanishes along the line $s = 0$, along which the rotation number 
approaches $3/4$. Considering the next order correction one can see that for $j_1 > 0$ only 
rotation numbers in the interval $(3/4, 1]$ occur.
The equation ${\cal T} = 0$ can be approximately solved for $s$.
This shows that the curve of twistless tori curves upward from the horizontal axis $j_1 = 0$.
The parametric form in polar coordinates shows that at the origin the twistless curve has 
slope zero, and that higher order derivatives do not exist.

Evaluating the rotation number on the curve of vanishing twist and expanding for small $r$ gives
\[
   W^* \approx  \frac{3}{4} + \frac{3r}{8 \pi} (  \ln \frac{32}{r} - \frac52 )  \,.
\]

\begin{figure}
\centerline{ \includegraphics[width=12cm]{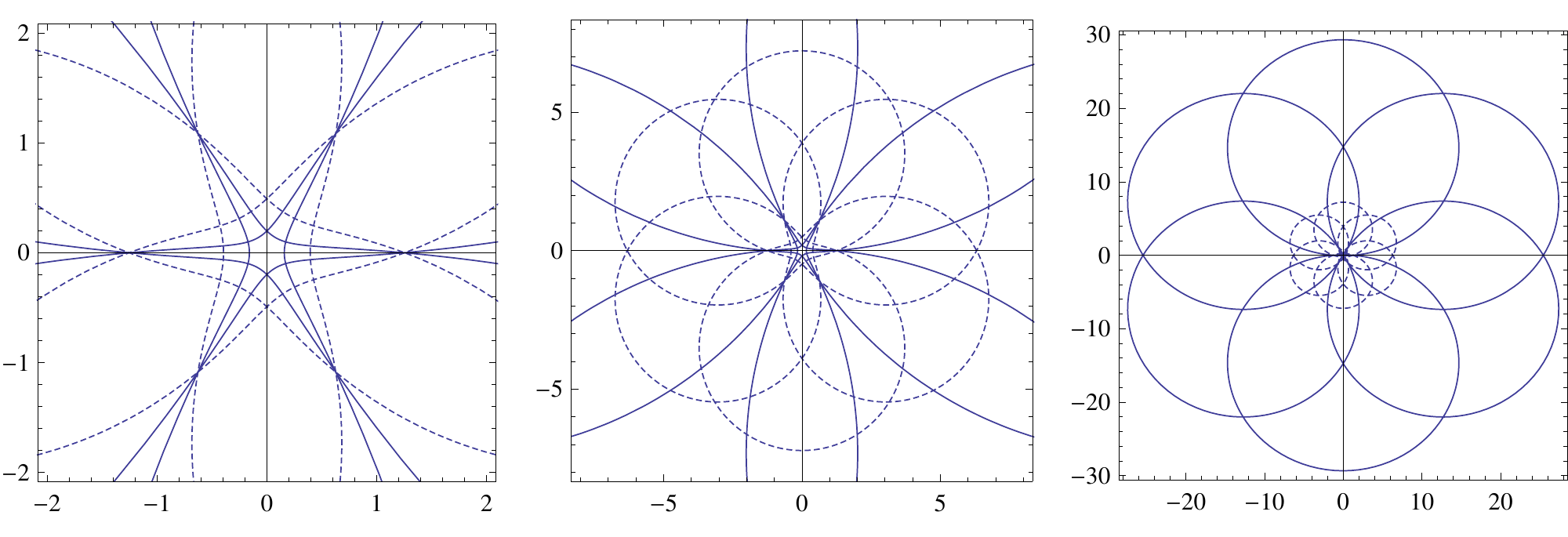} }
\caption{Two distinct periodic orbits of the spherical pendulum with same energy $h = 0.05$ and
same rotation number $W = 5/6$ shown in stereographic projection of the sphere fixing the unstable equilibrium point
in three different scales.
Existence of two orbit with the same rotation number for fixed energy implies that the twist 
vanishes for some initial condition ``in between''.
}
\label{WNT}
\end{figure}

\section{The Pendulum} \label{pendulum}

The pendulum Hamiltonian is obtained from the spherical pendulum Hamiltonian 
\eqref{eqn:Ham} by setting $j_2 = 0$:
\footnote{For the spherical pendulum we have chosen $h=0$ at the 
unstable equilibrium point which corresponds to $\theta= 0$. 
For the pendulum other conventions may seem more natural, 
but we stick with this unusual choice for ease of comparison with the spherical pendulum.}
\[
H =\frac12 \kappa \nu p_\theta^2 + \frac{\nu}{\kappa} (\cos\theta - 1)
    = \frac12 \kappa\nu  p_\theta^2 - 2 \frac{\nu}{\kappa}  \sin^2 \frac{\theta}{2}
 \,.
\]
Introducing the scaled energy and action as before 
the canonical action $I = I_1(h, j_2 = 0)$, the imaginary action $J = J_1(h, j_2 = 0)$ 
(which does have the interpretation of twice the tunnelling integral in this case), 
and their $h$-derivatives $T$ and $U$, the real and imaginary period, respectively, 
for $h > 0$ are given by
\begin{align*}
    I_+(h) & = \frac{1}{2\pi} \oint_\cyclereal \sqrt{2(h + 2 \sin^2\theta/2)}\, \dee \theta
 %          = \frac{\sqrt{2h} }{\pi} \oint_\cyclereal \sqrt{1 + \frac{2}{h} \sin^2 \psi} \, \dee \psi
  %         = \frac{2}{\pi} \sqrt{2h} E(\sqrt{-2/h}) 
           = \frac{4}{\pi k } E(k), \quad \quad k^2 = \frac{2}{2 + h} \\
      J_+(h) & = \frac{1}{2\pi} \oint_\cycleimag \sqrt{2(h - 2 \sinh^2 \theta/2)} \dee \theta
           = \frac{8}{\pi k} (K(k') - E(k')) \\
           T_+(h) & = 2\pi \frac{\partial I(h)}{\partial h} =2 k K(k) \\
        U_+(h) & = 2\pi \frac{\partial J(h)}{\partial h} = 4 k K( k' ), 
        \qquad k'^2 = 1 - k^2 = \frac{h}{2 + h} 
\end{align*}
For $h < 0$ the corresponding results may look unusual because they are not 
the true action or period of the pendulum, but the analogue of the action and 
period for the spherical pendulum, which differ by a factor $1/2$ 
(and thus the action becomes continuous across $h=0$). 
The results are
\begin{align*}
   I_-(h) & = \frac{4}{\pi}( E(k) - (1 - k^2) K(k)), \quad k^2 = \frac{2 + h}{2} \\
   J_-(h) & = -2 I_-(-2-h) =  \frac{8}{\pi}( k^2 K(k') -  E(k')) \\
   T_-(h) & = 2 K(k) \\
   U_-(h) & = 2 T_-( -2 - h)/2 \pi  = 4 K(k') , \qquad k'^2 = -\frac{h}{2} > 0
\end{align*}
The simple relation between $2 T_-$ and $U_-$ given by swapping $k$ for $k'$ 
was first noted by Appell \cite{Appell1879}. % Whittaker paragraph 44 cites Appell 1878 in Comptes Rendus Vol 87
His interpretation of the imaginary period $U$ is that it is obtained by replacing time $t$ 
in the equations of motion by $\imag t$. 
We see that a similar relation holds for the action $2 I$ and the imaginary action $J$, 
except for another overall minus sign which is chosen such that $j = h + O(h^2)$
for small $h$, as required by the Birkhoff normal form near the hyperbolic equilibrium.
It should be noted, however, that Appel's relations are only true for energies below the critical energy $h=0$.  
The general question of the physical meaning of the action or period integrals over the 
$\cycleimag$ and $\cyclereal$ cycles in post quantum mechanics times is answered by uniform WKB quantisation, 
in which the action and the imaginary action both appear. 
In this paper we add another interpretation which is the relation between the Birkhoff normal form 
at an unstable equilibrium point and action integrals of the $\cycleimag$-cycles that vanish 
at this equilibrium point.

The series expansions for both signs of $h$ are
\begin{align*}
2 \pi I & = 8 + h +  \left( h - \frac{1}{16} h^2 + \dots \right) \ln(32/|h|) + \frac{3}{32} h^2 + \dots \\
        J & = h - \frac{1}{16} h^2 + \dots \\
        T & = \left ( 1 - \frac18 h - \frac{9}{256} h^2 +  \dots \right ) \ln( 32/|h|) + \frac14 h + \dots \\
        U/2\pi & = 1 - \frac18 h + \frac{9}{256} h^2 + \dots
\end{align*}

{\bf Remarks:} 
1) The four integrals satisfy the Legendre relation $ I U - J T= 8 $.
2)~Another way to interpret the actions given above for the case that $h < 0$ 
is to consider the pendulum 
reduced by the discrete symmetry $(\theta - \pi, p) \to (-\theta + \pi, -p)$.
Alternatively certain integer factors need to be introduced to recover the true 
semi-global symplectic invariants of the pendulum. See \cite{DVN05} for a discussion
of these integers in the case of the C.~Neumann system.
3) The two branches of $J$ and $U$ for $h< 0$ and $h>0$ join analytically at $h=0$.

For the pendulum the calculations are simpler because they do not involve elliptic integrals of the third kind.
In fact $J(h)$ and $I(h)$ are of  the second kind and are simply the integrals over the 
$\cycleimag$- and $\cyclereal$-cycle of the same elliptic curve. 
The semi-global symplectic invariants 
are obtained by computing $I \circ J^{-1}$, i.e.\ the integral over the $\cyclereal$-cycle
as a function of the integral over the $\cycleimag$-cycle. 
The result is 
\[
  2\pi I(j) = 8 + j( 1 + \ln ( 32/|j|) ) + \frac{3}{32} j^2 + \dots
\]
This can be done efficiently to high order because the series expansions of 
the complete elliptic integrals $K$ and $E$ are well known.
Hence we find the quadratic and higher order terms of the semi-global symplectic invariant of the pendulum are given by
\[
\frac{3 j^2}{32}
-\frac{5 j^3}{512}
+\frac{55 j^4}{32768}
-\frac{189 j^5}{524288}
+\frac{3689 j^6}{41943040}
-\frac{3129 j^7}{134217728}
 +\frac{1575405 j^8}{240518168576}...
% -\frac{2112825 j^9}{1099511627776}
\,.
\]
% The coefficient of $j^{10}$ is $11 \cdot 1398251 / (2^{43} \cdot 3)$, 
% that of $j^{15}$ is $- 17\cdot 31178772127 / (2^{65} \cdot 7)$.
This can be obtained from the semi-global symplectic invariant of the spherical pendulum 
Theorem~\ref{theo:invariants} by setting $j_2 = 0$.
%
% \footnote{probably should stop here at this point!}

We are now going to connect the semi-global symplectic invariant 
to the Riemann matrix and $\vartheta$ functions. 
The main observation is that if we differentiate 
$I(J)$ with respect to $J$ we obtain the ratio $I'/J'$ where the prime 
denotes differentiation with respect to the energy.
Now $I'$ and $J'$ are the real and complex period, 
so that this gives the period ratio of the curve (up to a factor of $\imag$).
\footnote{When we speak of period ratio in this section we mean period 
ratio $\tau$ of the elliptic curve instead of the rotation number $W$.}
\[
 \frac{\partial I}{\partial J} = \frac{ \partial I/\partial h}{\partial J/ \partial h} = 
 \frac{T}{U} =
  \frac{ K(k) }{ 2 K(k') } = \frac{\imag \tau'}{2}
  =\frac{1}{2\pi} \left(  \ln \frac{32}{|j|} + \frac{ \frac14 j + O(j^2)} { 1 - \frac18 j + O(j^2) } \right) > 0
\]
where $\tau'$ denotes the reciprocal of the usual period ratio $\tau = \imag K'/K$,
since we are expanding in $h$, hence in $k'^2$ instead of in $k^2$. 
% \footnote{still need to check which $q$ is used further below... Since they are not  modular invariants it maters!}

Notice that by passing from $I(J)$ to its derivative we do not 
lose any information since the constant term is not part of the 
semi-global symplectic invariant. 

In more degrees of freedom and near completely hyperbolic points 
this may give a variant of the Riemann matrix, 
e.g.\ in the Neumann system \cite{DVN05}. 
The pre-potential computed in \cite{DVN05} is a function of imaginary actions
whose Hessian is the Riemann matrix. In particular in this higher dimensional 
situation it may be advantageous to work with the Riemann matrix instead of with $I(J)$ 
as invariants).

Instead of the period ratio $\tau$ (or the Riemann matrix) commonly 
the so called nome $q = e^{2 \pi \imag \tau}$ is used.
Passing to the nome converts the logarithmic singularity in the period $T$ into a pole.
Exponentiation thus leads to a power series in $j$
\footnote{The sign in the exponent is fixed by convention so that $\Im(\tau) > 0$ 
which implies that the Riemann matrix has positive definite real part, so that
$q = e^{-R} = e^{\imag \pi\tau} = e^{-\pi K'/K} < 1$.
}
\begin{align}
   q & = \exp\left( -2 \pi \frac{\partial I }{ \partial J}\right) \nonumber \\
   & =  l - 6 l^2 + 48 l^2 - 436 l^4 + 4254 l^5 - 43452 l^6 \label{eqn:nomePend}
       + 458192 l^7 % - 4945872 l^8 
        \pm \dots
\end{align}
where $l = j/32$. Alternatively we may consider the reciprocal which still 
`remembers' that there was a singularity which is now turned into a pole
\[
   \exp\left( 2 \pi \frac{\partial I }{ \partial J}\right) = 
       \frac{1}{l} + 6 - 12 l + 76 l^2 - 606 l^3 + 5412 l^4 \pm \dots
\]
% The coefficient of $l^{10}$ is prime up to a factor of 2.
The coefficients of either series are an equivalent set of ``modified'' semi-global symplectic invariants. 
Hence it appears that the nome itself (or the Riemann matrix in general) 
gives an equivalent set of modified semi-global symplectic invariants, 
when expressed as a function of the imaginary action $J$.
This is a very natural object to consider from the complex analytic point of view.
The expansion of the nome in terms of the modulus $k^2$ is well known
(the inverse of which can be expressed in terms of theta functions).
Now $k^2 = 2/(h + 2)$ for the pendulum and the Birkhoff normal form allows us to replace $h$ by $j$ and thus gives the 
final modified semi-global symplectic invariants.
Thus we can compute $q(J)$ from known functions and the Birkhoff normal form (whose functional 
inverse is also expressed in terms of known functions).

It turns out that even though we cannot find $q(J)$ directly, 
its inverse function $J(q)$ can be expressed in terms of extremely fast converging theta series 
$\vartheta_4 = \sum (-1)^n q^{n^2}$.
By standard identities involving thetanullwerte \cite{AS} we find that 
%\footnote{The primes denote derivative wrt to the supressed argument $u$, then evaluated at 0}
\begin{align*}
%    J = 4 \frac{\vartheta''_4}{\vartheta_4^3} = 
 J(q) % = 8q \frac{\partial}{\partial q} \frac{1}{\vartheta^2_3(0,-q)}
     & = -16 q \frac{ \partial \vartheta_4/ \partial q}{ \vartheta_4^3}
     = 8q  \frac{\partial}{\partial q} \frac{1}{\vartheta^2_4}
      = 8 q \frac{\partial}{\partial q} \frac{\pi}{2K(q)} \\
      & = 
    32 \frac{q - 4 q^4 + 9 q^9 - 16 q^{16} + O(q^{25})}{(1 - 2q + 2 q^4 - 2 q^9 + 2 q^{16} + O(q^{25}) )^3 } \\
    &  = 32 ( q + 6 q^2 + 24 q^3 + 76 q^4 + \dots) 
\end{align*}
and the inverse of this series gives the modified semi-global symplectic invariants.
The coefficients of the $q$-series of $\pi/(2K)$ and of $J/32$ have been studied by 
J.W.L.\ Glaisher \cite{Glaisher1886}.
He did, however, not consider the inverse of this series. 
Since the formal inversion of a power series is an invertible process, one may consider taking
 $J(q)$ itself as the invariant instead of its inverse. However, this seems to far removed from the original 
construction, in which $J$ provides a local coordinate system near the singularity.

The above formula for $J(q)$ incidentally also proves that the modified semi-global symplectic invariants
are integers after scaling by 32, since division by a series with constant 
coefficient 1 and integer coefficients gives a series of the same form,
and similarly for inversion of a series with linear coefficient 1.
The fact that these coefficients are integers may be taken as another indicator why it may be a good idea to consider
the series $\exp( -2 \pi \partial I/\partial J)$ instead of the analytic part of $I(J)$
as the semi-global symplectic invariants.

Let us finally comment on how similar expressions can be obtained for 
the spherical pendulum. The key idea is to pass to the derivatives 
of the invariant, namely 
$\tau_1 = \partial I_1 / \partial j_1$ and 
$\tau_2 = \partial I_1 / \partial j_2$, which are entries of the period lattice, 
see \cite{VuNgoc03}.
Now observe that the singular terms are 
$\tau_1 = -\ln |\hat j| + \dots $ and 
$\tau_2 = \arg \hat j + \dots$.
Thus 
$\exp( -2\pi \tau_1) = | \hat j | + \dots$ and 
$\exp( 2 \pi \imag \tau_2) = \hat j / |\hat j| + \dots$ so that 
the combination $\hat q = \exp( -2\pi \tau_1 + 2\pi \imag \tau_2)$ is free 
of singularities and is a complex analogue of the nome $q$ in the case of the pendulum.
Repeating this construction with the full expression for $I_1$ in \eqref{eqn:Action}
including the symplectic invariant gives 
\begin{align*}
    \hat q & = \hat j \exp( -S_1 + \imag S_2)  \\
        & = l ( 1 + 6 ( l - 2 \bar l) - 3( 17l^2 + 2 l \bar l - 35 \bar l^2) + 2 (37 l^3 + 666 l^2 \bar l - 633 l \bar l -288\bar l^3 ) ...)
\end{align*}
where $l = \hat j/32 = (j_1 + \imag j_2)/32$. For $\bar l = l $ this reduces back to \eqref{eqn:nomePend} for the ordinary pendulum.
The construction can be done with $-S_1 - \imag S_2$ as well.
The main question that remains is whether the {\em inverse} of this series giving 
$j_1 + \imag j_2$ in terms of $\hat q$ and its conjugate has a simple expression in terms of 
say $\vartheta$-functions. This is harder to do than for the pendulum because 
the integral $J_1$ is of third kind.
But expressions for complete elliptic integrals of the third
kind in terms of the nome do not seem to be readily available, 
and would also need to depend on a second parameter, which would 
presumably be the imaginary part of $\hat q$).

\section*{Acknowledgement}

The author would like to thank George Papadopoulos for carefully reading the manuscript.
This research was supported by ARC grant DP110102001.

\goodbreak
\appendix

\section{Simultaneous linear normal form}
\label{app:Linear}

The symplectic linear normal forms near equilibria are due to Wiliamson \cite{Williamson36}.
% \footnote{two types! But not for SS, FF, well, there is a sign of $\cycleimag$, which  cannot be removed. But only in the true focus-focus case!}
Let the original variables be  $(\xi,p_\xi,\eta,p_\eta)$ with symplectic form 
$\Omega = \dee \xi \wedge \dee p_\xi + \dee \eta \wedge \dee p_\eta$, and assume that the equilibrium 
point is at the origin. 
The Hessian of the Hamiltonian at the origin is denoted by $D^2H$.
Let $J_4$ be the standard symplectic matrix corresponding 
to $\Omega$. For a linear focus-focus point the eigenvalues of
$J_4 D^2H$ are $\lambda = \pm \cycleimag \pm i \omega$,
 where we assume $\cycleimag \omega \not =  0, \omega > 0$.
 % \footnote{this excludes the spherical pendulum...}
% \footnote{probably all this can be deleted, just do it without a lot of theory}
The stable eigenspace is spanned by the eigenvectors
$v_s, \bar v_s$ of eigenvalues $\lambda_s, \bar \lambda_s = \cycleimag \pm i \omega$, 
and the unstable eigenspace is spanned by $v_u, \bar v_u$ of eigenvalues 
$\lambda_u, \bar \lambda_u = - \cycleimag \mp i \omega$. In each eigenspace
the dynamics is that of a (contracting or expanding) focus point, hence the name.

According to the Williamson (i.e.\ linear)  classification
the spherical pendulum has a saddle-saddle equilibrium with degenerate 
real eigenvalue and trivial Jordan blocks. From the non-linear point of view of the 
foliation of the integrable system it is a focus-focus point, because we may form 
a linear combination of the constants of motion $H$ and $L_z$ without changing 
the foliation. 
% But since we are interested not only in the foliation but in properties of the
% dynamics of $H$ we do not have this freedom here. 
This is why we treat the saddle-saddle case as a degenerate focus-focus case, as 
is appropriate from the non-linear point of view. We know that for the 
non-linear system the angular momentum $L_z$ is a constant of motion,
even though it does not appear in the quadratic part. 

In fact we want a little bit more than the Williamson normal form, since we want to normalise the Hamiltonian and 
the second integral simultaneously. Since they commute, they can be simultaneously 
normalised. In the case of the spherical pendulum the angular momentum 
$L_z$ is already in normal form $L_z =  \xi p_\eta - \eta p_\xi$.
Thus we seek a linear symplectic transformation $M$ from $(\xi, p_\xi, \eta, p_\eta)$ to $(q_1,p_1,q_2,p_2)$ 
that leaves $J_2 = \xi p_\eta - \eta p_\xi = q_1 p_2 - q_2 p_1$ invariant 
and transforms the Hamiltonian to $\cycleimag J_1 + \omega J_2$
where $J_1 = q_1p_1 + q_2 p_2$. The eigenvalues of the equilibrium at
the origin are $\lambda = \pm \cycleimag \pm i \omega$, which in our case 
simply gives $\lambda = \pm \nu$, since $\omega = 0$. Hence $M$ must satisfy
\[
M^t J_4 M = J_4, \quad
M^t D^2J_2 M = D^2 J_2, \quad
M^t D^2 H M = \cycleimag D^2 J_1 + \omega D^2 J_2\,.
\]
Note that since $\{J_1, J_2\} = 0$  also their Hessians $D^2J_1$ and $D^2J_2$ commute,
and hence they diagonalise in the same basis.
The transformation given in lemma~\ref{Williamson} satisfies these conditions.

\section{Canonical Perturbation Theory} \label{pert}

Instead of Lie-series perturbation theory the analogue of canonical perturbation theory
may be used, which we briefly describe in this appendix.
Considering $H_4$ as given in \eqref{H4is} as a series of exponential function
we now introduce a formal averaging operator. 
Given a function 
\[
   F = \sum_{m \in \mathbb{Z}} f_{m} e^{m \theta_1}
\]
we define 
\[
\langle F \rangle =  f_{0}, \quad
\{ F \} = F - \langle F \rangle \,.
\]
The ``oscillating'' part $\{ F \}$ can be integrated to give the generating 
function of a canonical transformation. The average $\langle F \rangle$ 
cannot be removed by a canonical transformation. But it is independent 
of $\theta_1$, and this is the desired outcome.
The procedure is analogous to ordinary perturbation theory in which 
$\theta_1$ actually is an angle, and the series is trigonometric instead. 
The formal analogy works on the level of the series expansion, while the
usual interpretation as ``averaging over the flow of the unperturbed system'' 
does not work directly in our case because this flow is unbounded.
This is so because $J_1$ is not an action, i.e.\ it does not have a periodic flow. 
Nevertheless, $J_1$ is a local integral near the equilibrium point, thus
the name ``integral-angle'' variables.

In the present case this procedure gives
\[
   \langle H_4 \rangle = \frac{1}{16}J_1^2 + \frac{3}{16} J_2^2
\]
and by integration 
\begin{align*}
S_1(J_1, J_2, \theta_1, \theta_2) & = \int \{ H_4 \} d\theta_1 \\
         & = 
             \frac{5}{128} J^4 e^{-4\theta_1} + \frac{1}{16} J_1 J^2 e^{-2\theta_1} 
                    - \frac{1}{16} J_1 e^{2\theta_1} - \frac{5}{128} e^{4\theta_1} \,.
\end{align*}
The mixed variables generating function of the canonical transformation is
\[
    S(\tilde J_1, \tilde J_2, \theta_1, \theta_2) = \tilde J_1 \theta_1 + \tilde J_2 \theta_2 + \eps S_1(\tilde J_1, \tilde J_2, \theta_1, \theta_2)
\]
where $\tilde J_i$ are the new momenta.
Inverting the transformations explicitly up to first order gives
\begin{align*}
  J_i & = \tilde J_i + \eps \partial S_1( \tilde J_1, \tilde J_2, \tilde \theta_1, \tilde \theta_2) / \partial \tilde \theta_i \\
  \theta_i & = \tilde \theta_i - \eps \partial  S_1( \tilde J_1, \tilde J_2, \tilde \theta_1, \tilde \theta_2) / \partial \tilde J_i
\end{align*}
% where $D_1$ and $D_2$ denote the derivatives with respect to the first and second slot, respectively. 
This leaves the angular momentum $J_2$ unchanged, 
while $J_1$ is modified by $-\eps \{ H_4\}$. Both angles are transformed in a 
non-trivial way. 

\section{Rotation Number}

The rotation number can also be  computed directly from elliptic integrals, 
instead of from the normal form and the semi-global symplectic invariants. 
For completeness, and to show the consistency of the approaches here we give
the expansion of the rotation number in $h$ and $j_2$. We start with 
the expression in Legendre normal form, which is
\[
W(h,j_2) = -\frac{\partial}{\partial j_2} I_1 =  \frac{1}{2\pi} \oint_\cyclereal \frac{j_2}{(1-\zeta^2)w} \, \dee \zeta 
   = \frac{j_2}{\pi\sqrt{2 (\zeta_2 - \zeta_0)}} \left( \frac{\Pi(n_-,k)}{1-\zeta_0}  + \frac{ \Pi(n_+,k)}{1+\zeta_0}\right) \,.
\]
Similarly, the period of the reduced motion is 
\[
T(h,j_2) = 2\pi \frac{\partial}{\partial h} I_1 = \oint_\cyclereal \frac{1}{w} \, \dee \zeta  
= \frac{2 \sqrt{2}}{\sqrt{\zeta_2 - \zeta_0}} K(k) \,.
\]
Expanding the elliptic integrals in the limit $h, j_2 \to 0$ gives

\begin{lem}
The rotation number of the spherical pendulum near the focus-focus point has the expansion
\[
\begin{aligned}
2 \pi W & =  2 \pi \sgn j_2   -  \Arg( h + i j_2)  + \frac{3}{8} j_2 \left(1 - \frac{5}{16}h+ \frac{35}{256} (h^2 + j_2^2) + \dots \right) \ln \frac{32}{\varrho}  \\
  & - \frac{j_2}{8} \frac{5h^2 + 6j_2^2}{\varrho^2} +
          \frac{j_2h}{256} \frac{77h^4 + 174j_2^2h^2 + 93j_2^4}{\varrho^4} + \dots
\end{aligned}
\]
where $h$ is the difference in energy to the critical value, $j_2$ is the angular momentum, 
and $ \varrho^2 = j_2^2 + h^2$.
\end{lem}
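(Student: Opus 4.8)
The plan is to expand the closed form for the rotation number displayed just above the lemma,
\[
 W(h,j_2)=\frac{j_2}{\pi\sqrt{2(\zeta_2-\zeta_0)}}\left(\frac{\Pi(n_-,k)}{1-\zeta_0}+\frac{\Pi(n_+,k)}{1+\zeta_0}\right),
\]
in the singular limit $h,j_2\to 0$, reusing the root asymptotics of Lemma~\ref{lem:Iseries}. After the rescaling $h\to h\eps$, $j_2\to j_2\eps$ one has, with $\varrho=\sqrt{h^2+j_2^2}$, the expansions $1\mp\zeta_0=2\pm\tfrac18 j_2^2+\dots$, $\zeta_2-\zeta_0=2+\tfrac12(h+\varrho)+\dots$, $k^2=1-\tfrac12\varrho+O(\eps^2)$, $n_+=1+\tfrac14(h-\varrho)+O(\eps^2)$ and $n_-\sim-16/j_2^2$. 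Since $\varrho\ge|h|$ we have $1-n_+=\tfrac14(\varrho-h)>0$ and $n_+-k^2=\tfrac14(\varrho+h)>0$, so both complete integrals of the third kind are of circular type, with complementary modulus $k'^2=\tfrac12\varrho$ the small parameter.

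Following the device used in the proof of Lemma~\ref{lem:Iseries}, I would not expand $\Pi(n_+,k)$ and $\Pi(n_-,k)$ separately but rewrite them through $K(k)$ and Heuman's $\Lambda_0(\varphi_\pm,k)$, combine the two pieces by the addition law for $\Lambda_0$, and then insert the $k'\to0$ expansions of $K$, $E$ and $\Lambda_0$ from \cite{AS,BF71}. Three outputs should appear. The argument of the surviving $\Lambda_0$ becomes $\varphi=\pi-\arcsin\sqrt{\tfrac12(1-h/\varrho)}+\dots=\pi-\tfrac12\Arg(h+\imag j_2)+\dots$, and since $\Lambda_0(\varphi,k)\to\tfrac2\pi\varphi$ as $k'\to0$ this produces the term $-\Arg(h+\imag j_2)$; the additive constant is then fixed to $2\pi\sgn j_2$, exactly as $-2\pi|j_2|$ was chosen in \eqref{Iseries}, so that $W$ agrees with the real integral, is odd in $j_2$, and tends to $\pm1$ for $h>0$ and to $\pm\tfrac12$ for $h<0$ as $j_2\to0$. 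The logarithmic behaviour $K(k)\sim\ln(4/k')$ yields the $\ln(32/\varrho)$-term, whose coefficient must assemble to $\tfrac38 j_2\bigl(1-\tfrac5{16}h+\tfrac{35}{256}(h^2+j_2^2)+\dots\bigr)=-\partial J_1/\partial j_2$, consistent with the earlier remark that this is the integral of the same differential over the vanishing $\cycleimag$-cycle. Finally the algebraic prefactors $\sqrt{n/((1-n)(n-k^2))}$ multiplying $\Lambda_0$ become rational in $h,j_2$ in the limit and, collected with the elementary contributions of $\tilde c_1K(k)$ and $c_2E(k)$, reproduce the $\varrho^{-2}$ and $\varrho^{-4}$ blocks. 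A convenient internal check is that the same series must come out of $-2\pi\,\partial_{j_2}$ applied to \eqref{Iseries}: the order-$\eps^0$ pieces $-j_2h/\varrho^2$ and $J_1\,j_2/\varrho^2$ cancel, and the remaining order-$\eps$ terms collapse as $-\tfrac{j_2}{16}\bigl(\tfrac{h^2+3j_2^2}{\varrho^2}+9\bigr)+\dots=-\tfrac{j_2}{8}\tfrac{5h^2+6j_2^2}{\varrho^2}+\dots$, matching the stated coefficient.

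The step I expect to be the genuine obstacle is the expansion of the circular-case third-kind integral $\Pi(n_+,k)$ in the \emph{double} limit $n_+\to1$, $k\to1$ with $1-n_+$ and $k'^2$ of the same order $O(\eps)$: any fixed-$n$ asymptotic expansion of $\Pi$ fails here, and the $\Lambda_0$-reduction has to be carried out with its argument $\varphi_+$ itself approaching an endpoint, so that $\varphi_+$ and $k'$ must be expanded jointly and to matching order. The second integral, by contrast, contributes only at subleading order, since it comes with the overall factor $j_2$ and $\Pi(n_-,k)=O(1/\sqrt{|n_-|})=O(|j_2|\eps)$, and is routine. Once the delicate $\Pi(n_+,k)$-expansion is in place, the remainder is mechanical series manipulation using the standard formulae of \cite{AS,BF71} together with the root expansions of Lemma~\ref{lem:Iseries}.
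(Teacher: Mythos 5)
Your proposal is correct and takes essentially the same route as the paper: the paper's proof consists precisely of expanding the Legendre-form expression for $W$ displayed before the lemma in the limit $h,j_2\to 0$, using the same root expansions and the Heuman-$\Lambda_0$ reduction with the addition law already employed in the proof of Lemma~\ref{lem:Iseries}, which is exactly what you describe. Your cross-check via $-\partial_{j_2}$ applied to \eqref{Iseries} also mirrors the paper's own consistency remark that the coefficient of the logarithm is the residue series obtained from the $j_2$-derivative of $J_1$, i.e.\ the function $A$ of \eqref{Aofj}.
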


The smooth function multiplying $\ln$ is given by the residue series of the integrand of $W$, 
in the same way as $J_1$ is the residue series of the integrand of $I_1$.
Taking this function, substituting (\ref{hofj}),
and expanding in $j_1$ and $j_2$ reproduces the function $A(j_1, j_2)$ given by (\ref{Aofj}) above.

\bibliographystyle{plain}
%\bibliography{../../bib_cv/all,../../bib_cv/hd}

\def\cprime{$'$}

\end{document}